\numberwithin{equation}{section}
\newtheorem{thm}{Theorem}[section]
\newtheorem{prop}[thm]{Proposition}
\newtheorem{lem}[thm]{Lemma}
\newtheorem{cor}[thm]{Corollary}
\theoremstyle{definition}
\newtheorem{rem}[thm]{Remark}
\newcommand{\mbb}{\mathbb}
\newcommand{\al}{\alpha}
\newcommand{\phih}{\phi_{\mathcal{H}}}
\newcommand{\xih}{\chi_{\mathcal{H}}}
\newcommand{\cdzor}{\bar{B}(Z_0, R)}
\newcommand{\xidj}{\chi_{D_j}}
\newcommand{\Cmincd}{\mathbb{C} \setminus \cdzor}
\newcommand{\Cdj}{c_{D_j}}
\newcommand{\sdj}{S_j}
\newcommand{\lrarw}{\longrightarrow}
\newcommand{\h}{\mathcal{H}}
\newcommand{\D}{\mathbb{D}}
\newcommand{\C}{\mathbb{C}}
\newcommand{\pin}{\pi_n}
\newcommand{\pit}{\tilde \pi}
\newcommand{\pint}{\tilde \pi_n}
\newcommand{\dnt}{\tilde D_n}
\newcommand{\du}{U \cap D}
\begin{document}
\title{Boundary behaviour of some conformal invariants on planar domains}

\author{Amar Deep Sarkar and Kaushal Verma}

\address{ADS: Department of Mathematics, Indian Institute of Science, Bangalore 560012, India}
\email{amarsarkar@iisc.ac.in}

\address{KV: Department of Mathematics, Indian Institute of Science, Bangalore 560 012, India}
\email{kverma@iisc.ac.in}

\keywords{Higher order curvature, Carath\'{e}odory metric, Aumann-Carath\'{e}odory constant, Hurwitz metric, quadratic differential}

\subjclass{32F45, 32A07, 32A25}

\begin{abstract}
The purpose of this note is to use the scaling principle to study the boundary behaviour of some conformal invariants on planar domains. The focus is on the Aumann--Carath\'{e}odory rigidity constant, the higher order curvatures of the Carath\'{e}odory metric and two conformal metrics that have been recently defined.
\end{abstract}

\maketitle

\section{Introduction}

The scaling principle in several complex variables provides a unified paradigm to address a broad array of questions ranging from the boundary behaviour of biholomorphic invariants to the classification of domains with non-compact automorphism group. In brief, the idea is to blow up a small neighbourhood of a smooth boundary point, say $p$ of a given domain $D \subset \mathbb C^n$ by a family of non-isotropic dilations to obtain a limit domain which is usually easier to deal with. The choice of the dilations is dictated by the Levi geometry of the boundary near $p$ and the interesting point here is that the limit domain is not necessarily unique. For planar domains, this method is particularly simple and the limit domain always turns out to be a half space if $p$ is a smooth boundary point. 

\medskip

The purpose of this note is to use the scaling principle to understand the boundary behaviour of some conformal invariants associated to a planar domain. We will focus on the Aumann--Carath\'{e}odory rigidity constant \cite{MainAumannCaratheodory}, the higher order curvatures of the Carath\'{e}odory metric \cite{BurbeaPaper}, a conformal metric arising from holomorphic quadratic differentials \cite{SugawaMetric} and finally, the Hurwitz metric \cite{TheHurwitzMetric}. These analytic objects have nothing to do with one another, except of course that they are all conformal invariants, and it is precisely this disparity that makes them particularly useful to emphasize the broad utility of the scaling principle as a technique even on planar domains.

\medskip

While each of these invariants requires a different set of conditions on $D$ to be defined in general, we will assume that $D \subset \mathbb C$ is bounded -- all the invariants are well defined in this case and so is $\lambda_D(z) \vert dz \vert$, the hyperbolic metric on $D$. Assuming this will not entail any great loss of generality but will instead assist in conveying the spirit of what is intended with a certain degree of uniformity. Any additional hypotheses on $D$ that are required will be explicitly mentioned. Let $\psi$ be a $C^2$-smooth local defining function for $\partial D$ near $p \in \partial D$ and let $ \lambda_D(z) \vert dz \vert $ denotes the hyperbolic metric on $ D $. In what follows, $\mathbb D \subset \mathbb C$ will denote the unit disc. The question is to determine the asymptotic behaviour of these invariants near $p$. Each of the subsequent paragraphs contain a brief description of these invariants followed by the corresponding results and the proofs will follow in subsequent sections after a description of the scaling principle for planar domains.

\medskip

\subsection{\textit{Higher order curvatures of the Carath\'{e}odory metric}:} Suita \cite{SuitaI} showed that the density $c_D(z)$ of the Carath\'{e}odory metric is real analytic and that its curvature
\[
\kappa(z) = - c_D^{-2}(z) \Delta \log c_D(z) 
\]
is at most $-4$ for all $z \in D$. In higher dimensions, this metric is not smooth in general. 

\medskip

For $j, k \ge 0$, let $\partial^{j \overline k} c_D$ denote the partial derivatives $\partial^{j+k} c_D/\partial z_j \partial \overline z_k$. Write $((a_{jk}))_{j, k \ge 0}^n$ for the $(n+1) \times (n+1)$ matrix whose $(j, k)$-th entry is $a_{jk}$. For $n \ge 1$, Burbea \cite{BurbeaPaper} defined the $n$-th order curvature of the Carath\'{e}odory metric $c_D(z) \vert dz \vert$ by
\[
\kappa_n(z: D) = -4c_D(z)^{-(n+1)^2} J^D_n(z)
\]
where $J^D_n(z) = \det ((\partial^{j \overline k} c_D))_{j, k \ge 0}^n$. Note that
\[
\kappa(z) = \kappa_1(z : D)
\]
which can be seen by expanding $J_1(z)$. Furthermore, if $f : D \rightarrow D'$ is a conformal equivalence between planar domains $D, D'$, then the equality
\[
c_D(z) = c_{D'}(f(z)) \vert f'(z) \vert
\]
upon repeated differentiation shows that the mixed partials of $c_D(z)$ can be expressed as a combination of the mixed partials of $c_{D'}(f(z))$ where the coefficients are rational functions of the derivatives of $f$ -- the denominators of these rational functions only involve $f'(z)$ which is non-vanishing in $D$. By using elementary row and column operations, it follows that
\[
J^D_n(z) = J^{D'}_n(f(z)) \vert f'(z) \vert^{n+1}
\]
and this implies that $\kappa_n(z: D)$ is a conformal invariant for every 
$n \ge 1$. If $D$ is equivalent to $\mathbb D$, a calculation shows that
\[
\kappa_n(z: D) = -4 \left( \prod_{k=1}^n k ! \right)^2
\]
for each $z \in D$. For a smoothly bounded (and hence of finite connectivity) $D$, Burbea \cite{BurbeaPaper} showed, among other things, that
\[
\kappa_n(z: D) \le -4 \left( \prod_{k=1}^n k ! \right)^2
\]
for each $z \in D$. This can be strengthened as follows:

\begin{thm}\label{T:HigherCurvature}
	Let $D \subset \mathbb C$ be a smoothly bounded domain. For every $p \in \partial D$
	\[
	\kappa_n(z: D) \rightarrow -4 \left( \prod_{k=1}^n k ! \right)^2
	\]
	as $z \rightarrow p$.
\end{thm}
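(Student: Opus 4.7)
The plan is to combine the scaling principle with the conformal invariance of $\kappa_n$ and the fact that $\kappa_n(w : \h) \equiv -4\bigl(\prod_{k=1}^n k!\bigr)^2$ on the upper half-plane $\h$, which is conformally equivalent to $\D$. Let $\{z_\nu\} \subset D$ be an arbitrary sequence with $z_\nu \to p$. After a rigid motion, assume $p = 0$ and that $\partial D$ is locally the graph $y = \phi(x)$ with $\phi(0) = \phi'(0) = 0$. Let $p_\nu \in \partial D$ be the foot of the inner normal from $z_\nu$, and set $\delta_\nu = |z_\nu - p_\nu| \to 0$. The affine dilation $T_\nu(w) = (w - p_\nu)/\delta_\nu$ is a conformal equivalence of $D$ onto $D_\nu := T_\nu(D)$, sending $z_\nu$ to $\tilde z_\nu \to i$. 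Since $\phi$ is $C^2$ and vanishes to second order at the origin, a direct computation shows that $\{D_\nu\}$ converges to $\h$ in the Carath\'{e}odory kernel sense. By conformal invariance of $\kappa_n$,
\[
\kappa_n(z_\nu : D) = \kappa_n(\tilde z_\nu : D_\nu),
\]
so it suffices to prove that $\kappa_n(\tilde z_\nu : D_\nu) \to \kappa_n(i : \h)$.

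The key analytic input is convergence $c_{D_\nu} \to c_\h$ in $C^{2n}_{\mathrm{loc}}(\h)$, which gives entry-wise convergence of the matrices $\bigl((\partial^{j\overline k} c_{D_\nu}(\tilde z_\nu))\bigr)_{j,k=0}^n$ to $\bigl((\partial^{j\overline k} c_\h(i))\bigr)_{j,k=0}^n$, hence $J_n^{D_\nu}(\tilde z_\nu) \to J_n^{\h}(i)$; combined with $c_{D_\nu}(\tilde z_\nu) \to c_\h(i) > 0$ this yields $\kappa_n(\tilde z_\nu : D_\nu) \to \kappa_n(i : \h)$. Locally uniform convergence $c_{D_\nu} \to c_\h$ itself is standard: the bound $\liminf c_{D_\nu}(\tilde z_\nu) \ge c_\h(i)$ follows by transplanting an extremal $f : \h \to \D$ with $f(i) = 0$ to $D_\nu$ and re-centering at $\tilde z_\nu$ via a M\"{o}bius automorphism of $\D$; the reverse inequality comes from extracting, via Montel's theorem, a subsequential limit $f_0$ of the Ahlfors extremals $f_\nu$ for $(D_\nu, \tilde z_\nu)$ and observing, via Hurwitz, that $f_0 : \h \to \D$ is non-constant with $|f_0'(i)| = \lim |f_\nu'(\tilde z_\nu)|$.

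The hard part will be upgrading this $C^0_{\mathrm{loc}}$ convergence to $C^{2n}_{\mathrm{loc}}$ convergence. Here the plan is to invoke the real analyticity of $c_D$ on smoothly bounded planar domains due to Suita: the Ahlfors map $g_{w,\nu} : D_\nu \to \D$ depends real-analytically on the base point $w$, and a standard perturbation argument produces uniform bounds on $g_{w,\nu}$ for $w$ in any fixed compact subset of $\h$ and all sufficiently large $\nu$. This lets one extend $w \mapsto g_{w,\nu}'(w) = c_{D_\nu}(w)$ as a uniformly bounded holomorphic function on a fixed polydisc neighborhood in $\C^2$ (treating $w$ and $\overline w$ as independent variables), after which Cauchy estimates convert locally uniform convergence into convergence of all mixed partials up to the required order. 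Equivalently, one may argue directly that the extremal representations for the mixed partials of $c_D$ are stable under Carath\'{e}odory kernel convergence. Either approach delivers the matrix-level convergence needed above, completing the proof.
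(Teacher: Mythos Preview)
Your overall strategy matches the paper's: scale, use conformal invariance of $\kappa_n$ to transfer the question to the scaled domains $D_\nu$, and reduce to showing $c_{D_\nu}\to c_{\h}$ together with all mixed partials uniformly on compact subsets of $\h$. The $C^0$ part is essentially the paper's, though the paper routes it through a localization to a simply connected piece $U\cap D$ and the Carath\'eodory kernel theorem for Riemann maps; your ``transplant an extremal $f:\h\to\D$ to $D_\nu$'' step needs a small adjustment since $\h\not\subset D_\nu$, but this is easily repaired.

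The real issue is the upgrade from $C^0_{\mathrm{loc}}$ to $C^{2n}_{\mathrm{loc}}$. Your idea of polarizing $c_{D_\nu}$ to a holomorphic function of $(w,\bar w)$ on a fixed polydisc and then applying Cauchy estimates is exactly right, but you have not supplied a mechanism for the polarizations to be \emph{uniformly} bounded in $\nu$; real analyticity of each $c_{D_\nu}$ separately (Suita) is not enough, and ``a standard perturbation argument'' does not name one. The paper fills precisely this gap via the Szeg\H{o} kernel: the identity $c_{D_\nu}(z)=2\pi S_{D_\nu}(z,z)$ gives the polarization for free, since $S_{D_\nu}(z,w)$ is holomorphic in $z$ and anti-holomorphic in $w$. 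Uniform bounds on $\{S_{D_\nu}\}$ over compacts of $\h\times\h$ follow from the reproducing-kernel inequality $|S_{D_\nu}(z,w)|^2\le S_{D_\nu}(z,z)\,S_{D_\nu}(w,w)$ together with monotonicity of the Carath\'eodory metric under inclusion of a fixed small disc in all $D_\nu$. Any subsequential limit $S$ then agrees with $S_{\h}$ on the diagonal (by the $C^0$ convergence already established), and a short power-series argument in $(z,\bar z)$ forces $S=S_{\h}$ off the diagonal as well; Cauchy estimates now deliver convergence of all mixed partials of $c_{D_\nu}$. If you replace your hand-waved polarization step with this Szeg\H{o}-kernel argument, the proof is complete and coincides with the paper's.
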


\subsection{\textit{The Aumann--Carath\'{e}odory rigidity constant}:} Recall that the Carath\'{e}odory metric $c_D(z) \vert dz \vert$ is defined by
\[
c_D(z) = \sup \left\{ \vert f'(z) \vert : f : D \rightarrow \mathbb D \; \text{holomorphic and} \; f(z) = 0 \right\}.
\]
Let $D$ be non-simply connected and fix $a \in D$. Aumann--Carath\'{e}odory \cite{MainAumannCaratheodory} showed that there is a constant $\Omega(D, a)$, $0\le \Omega(D, a) < 1$, such that if $f$ is any holomorphic self-mapping of $D$ fixing $a$ and $f$ is 
{\it not} an automorphism of $D$, then $\vert f'(a) \vert \le \Omega(D, a)$. For an annulus $\mathcal A$, this constant was explicitly computed by Minda \cite{AumannCaratheodoryRigidityConstant} and a key ingredient was to realize that
\[
\Omega(\mathcal A, a) = c_{\mathcal A}(a)/\lambda_{\mathcal A}(a).
\]
The explicit formula for $\Omega(\mathcal A, a)$ also showed that 
$\Omega(\mathcal A, a) \rightarrow 1$ as $a \rightarrow \partial \mathcal A$. For non-simply connected domains $D$ with higher connectivity, \cite{AumannCaratheodoryRigidityConstant} also shows that
\[
c_{D}(a)/\lambda_{D}(a) \le \Omega(D, a) < 1.
\]
Continuing this line of thought further, Minda in \cite{HyperbolicMetricCoveringAumannCaratheodoryConstant} considers a pair of bounded domains $D, D'$ with base points $a \in D, b \in D'$ and the associated ratio
\[
\Omega(a, b) = \sup \{ \left(f^{\ast}(\lambda_{D'})/\lambda_D\right)(a): f \in \mathcal N(D, D'), f(a) = b \}
\]
where $\mathcal N(D, D')$ is the class of holomorphic maps $f : D \rightarrow D'$ that are {\it not} coverings. Note that
\[
\left(f^{\ast}(\lambda_{D'})/\lambda_D\right)(a) = \left( \lambda_{D'}(b) \;\vert f'(a) \vert \right) / \lambda_D(a).
\]
Among other things, Theorems 6  and 7 of \cite{HyperbolicMetricCoveringAumannCaratheodoryConstant} respectively show that 
\[
c_D(a)/\lambda_D(a) \le \Omega(a, b) < 1
\]
and 
\[
\limsup_{a \rightarrow \partial D} \Omega(a, b) = 1.
\]
Note that the first result shows that the lower bound for $\Omega(a, b)$ is independent of $b$ while the second one, which requires $\partial D$ to satisfy an additional geometric condition, is a statement about the boundary behaviour of 
$\Omega(a, b)$.  Here is a result that supplements these statements and emphasizes their local nature:

\begin{thm}\label{T:AumannCaratheodoryConstant}
	Let $D, D' \subset \mathbb C$ be a bounded domains and $p \in \partial D$ a $C^2$-smooth boundary point. Then $\Omega(D, z) \rightarrow 1$ as $z \rightarrow p$. Furthermore, for every fixed $w \in D'$,  $\Omega(z, w) \rightarrow 1$ as $z \rightarrow p$.
\end{thm}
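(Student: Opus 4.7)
\medskip

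\noindent\textbf{Proof plan.} The plan is to reduce both conclusions to a single asymptotic statement, namely that
\[
\frac{c_D(z)}{\lambda_D(z)} \longrightarrow 1 \quad \text{as} \quad z \to p,
\]
and then to derive this from the scaling principle. The reduction is immediate from the inequalities recalled in the introduction: for any $z \in D$ one has $c_D(z)/\lambda_D(z) \le \Omega(D,z) < 1$, and also $c_D(z)/\lambda_D(z) \le \Omega(z,w) < 1$ for any fixed $w \in D'$ (the crucial point being that this lower bound is independent of $w$). A squeeze argument then yields both $\Omega(D,z) \to 1$ and $\Omega(z,w) \to 1$ as $z \to p$.

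To prove the ratio estimate, I would let $z_n \to p$ be an arbitrary sequence in $D$, and apply the scaling described in the next section: choose affine maps $\phi_n$ (a translation sending $z_n$ to the origin, a rotation aligning the outward normal at the nearest boundary point with a fixed direction, and an isotropic dilation by $1/\delta_D(z_n)$, where $\delta_D$ denotes the Euclidean distance to $\partial D$). Since $p$ is a $C^2$-smooth boundary point, the image domains $D_n = \phi_n(D)$ converge in the sense of Carath\'eodory kernels to the upper half plane $\mathbb{H}$, with the base points $\phi_n(z_n)$ sitting at a fixed interior point $z_0 \in \mathbb{H}$ (for instance $z_0 = i$). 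Conformal invariance of both $c_D$ and $\lambda_D$ gives
\[
\frac{c_D(z_n)}{\lambda_D(z_n)} \;=\; \frac{c_{D_n}(\phi_n(z_n))}{\lambda_{D_n}(\phi_n(z_n))}.
\]

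The analytic heart of the proof is then the convergence of the two invariant densities under kernel convergence of the domains: $c_{D_n}(\phi_n(z_n)) \to c_{\mathbb{H}}(z_0)$ and $\lambda_{D_n}(\phi_n(z_n)) \to \lambda_{\mathbb{H}}(z_0)$. The Carath\'eodory piece is a standard normal family argument applied to the extremal maps into $\mathbb D$ (one-sided bounds come from pulling back extremal maps; the other direction uses that bounded holomorphic maps on $D_n$ eventually live on any compact subset of $\mathbb{H}$). The hyperbolic piece is classical for planar domains via the Koebe uniformization of the $D_n$ and a normal families argument on the universal covering maps; the $C^2$-smoothness at $p$ ensures that the blown-up domains genuinely exhaust $\mathbb{H}$ and are exhausted by $\mathbb{H}$ in the kernel sense, so there is no degeneration. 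Finally, since $\mathbb H$ is simply connected, the Schwarz lemma gives $c_{\mathbb H} = \lambda_{\mathbb H}$, so the limiting ratio is $1$.

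Putting these pieces together yields $c_D(z_n)/\lambda_D(z_n) \to 1$ along every sequence $z_n \to p$, and hence the theorem. The main obstacle I anticipate is the justification of the metric convergence under the scaling; in particular, verifying that the hyperbolic densities on the $D_n$ converge to $\lambda_{\mathbb H}$ at interior points requires controlling the $D_n$ from inside and outside by concrete domains (say a disc tangent to $\partial D$ at $p$ from the inside, and the half space determined by the tangent line at $p$ from the outside), together with the monotonicity and conformal invariance of $\lambda$. Everything else is either conformal invariance or the classical comparison $c_D \le \lambda_D$.
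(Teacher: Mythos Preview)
Your proposal is correct and follows essentially the same route as the paper: squeeze $\Omega(D,z)$ and $\Omega(z,w)$ between $c_D(z)/\lambda_D(z)$ and $1$, then use the scaling principle to show that both $c_{D_n}$ and $\lambda_{D_n}$ converge at the base point to the corresponding densities on the limiting half-plane, where they coincide because the half-plane is simply connected. The only cosmetic difference is that the paper dilates by the defining function $-\psi(p_j)$ rather than $\delta_D(z_n)$ and omits the rotation (so its limit half-plane is tilted according to $\nabla\psi(p)$), and it appeals to its earlier Proposition on convergence of $c_{D_j}$ rather than sketching the normal-families argument directly.
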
 

\subsection{\textit{Holomorphic quadratic differentials and a conformal metric}:} We begin by recalling a construction due to Sugawa \cite{SugawaMetric}. Let $R$ be a Riemann surface and $\phi$ a holomorphic $(m, n)$ form on it. In local coordinates $(U_{\alpha}, z_{\alpha})$, $\phi = \phi_{\alpha}(z_{\al}) dz^2_{\al}$ where  $\phi_{\alpha} : U_{\alpha} \rightarrow \mathbb C$ is a family of holomorphic functions satisfying
\[
\phi_{\alpha}(z_{\alpha}) = \phi_{\beta} (z_{\beta}) \left(\frac{d z_{\beta}}{d z_{\alpha}}\right)^m \left(\frac{d \overline z_{\beta}}{d \overline z_{\alpha}}\right)^n
\]
on the intersection $U_{\alpha} \cap U_{\beta}$. For holomorphic $(2,0)$ forms, this reduces to
\[
\phi_{\alpha}(z_{\alpha}) = \phi_{\beta} (z_{\beta}) \left(\frac{d z_{\beta}}{d z_{\alpha}}\right)^2
\]
and this in turn implies that
\[
\Vert \phi \Vert_1 = \int_R \vert \phi \vert 
\]
is well defined. Consider the space
\[
A(R) = \left\lbrace \phi = \phi(z) \;dz^2 \; \text{a holomorphic}\; (2,0)\; \text{form on}\; R \; \text{with} \; \Vert \phi \Vert_1 < \infty \right \rbrace 
\]
of integrable holomorphic $(2,0)$ forms on $R$. Fix $z \in R$ and for each local coordinate $(U_{\al}, \phi_{\al})$ containing it, let
\[
q_{R, \al}(z) = \sup \left\lbrace \vert \phi_{\al}(z) \vert^{1/2} : \phi \in A(R) \; \text{with} \; \Vert \phi \Vert_1 \le \pi \right\rbrace.
\]
Theorem 2.1 of \cite{SugawaMetric} shows that if $R$ is non-exceptional, then for each $z_0 \in U_{\al}$ there is a unique extremal differential $\phi \in A(R)$ ($\phi = \phi_{\al}(z_{\al}) dz^2_{\al}$ in $U_{\al}$) with $\Vert \phi \Vert_1 = \pi$ such that
\[
q_{R, \al}(z_0) = \vert \phi_{\al}(z_0) \vert^{1/2}.
\]
If $(U_{\beta}, \phi_{\beta})$ is another coordinate system around $z$, then the corresponding extremal differential $\phi_{\beta}$ is related to $\phi_{\alpha}$ as
\[
\overline{w'(z_0)} \;\phi_{\beta} = w'(z_0) \; \phi_{\alpha}
\]
where $w = \phi_{\beta} \circ \phi_{\al}^{-1}$. Hence $\vert \phi_{\al} \vert$ is intrinsically defined and this leads to the conformal metric $q_R(z) \vert dz \vert$ with $q_R(z) = q_{\al}(z)$ for some (and hence every) chart $U_{\al}$ containing $z$.

\medskip

It is also shown in \cite{SugawaMetric} that the density $q_R(z)$ is continuous, $\log q_R$ is subharmonic (or identically $-\infty$ on $R$) and $q_{\mathbb D}(z) = 1/(1 - \vert z \vert^2)$ -- therefore, this reduces to the hyperbolic metric on the unit disc. In addition, \cite{SugawaMetric} provides an estimate for this metric on an annulus. We will focus on the case of bounded domains.

\begin{thm}\label{T:SugawaMetric}
Let $D \subset \mbb C$ be a bounded domain and $p \in \partial D$ a $C^2$-smooth boundary point. Then
\[
q_D(z) \approx 1/{\rm dist} (z, \partial D)
\]
for $z$ close to $p$.
\end{thm}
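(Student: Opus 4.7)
The plan is to apply the planar scaling principle directly and reduce the question to an explicit half-plane computation. Given a sequence $z_n \to p$ in $D$, set $\delta_n = {\rm dist}(z_n, \partial D)$, pick a nearest boundary point $p_n \in \partial D$, and consider the affine map $T_n(w) = e^{-i\theta_n}(w - p_n)/\delta_n$ with $\theta_n$ chosen so that $T_n(z_n) = i$. Because $\partial D$ is $C^2$ at $p$, the rescaled domains $T_n(D)$ Carath\'eodory-converge to the upper half plane $\mathbb H = \{\mathrm{Im}\, w > 0\}$. By the conformal invariance of $q_D$,
\[
\delta_n \cdot q_D(z_n) = q_{T_n(D)}(i),
\]
so the theorem reduces to showing that $q_{T_n(D)}(i)$ is bounded above and below by positive constants (I expect both to converge to $q_{\mathbb H}(i) = 1/2$).

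For the upper bound the tool is the monotonicity of $q$ under inclusion: if $D' \subset D''$ then restriction of quadratic differentials does not increase the $L^1$-norm, hence $q_{D'} \ge q_{D''}$ on $D'$. The $C^2$ smoothness at $p$ produces a uniform interior tangent disc at every nearby boundary point, so $T_n(D)$ contains a disc tangent to $\mathbb R$ at the origin of radius $R_n = r/\delta_n$, which contains the point $i$. Since $q$ on a disc equals the hyperbolic density, the monotonicity yields the explicit bound $q_{T_n(D)}(i) \le R_n/(2R_n - 1) \to 1/2$.

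For the lower bound I would exhibit a concrete admissible quadratic differential on $T_n(D)$. The key observation is that if $f : T_n(D) \to \mathbb D$ is injective holomorphic then the area formula gives
\[
\int_{T_n(D)} |f'|^2 \, dA = \mathrm{area}\bigl(f(T_n(D))\bigr) \le \pi,
\]
so $\phi = (f')^2 \, dw^2$ is admissible and $q_{T_n(D)}(i) \ge |f'(i)|$. The exterior tangent disc at $p_n$, provided by the $C^2$ smoothness, becomes after scaling a disc in the lower half plane tangent to $\mathbb R$ at the origin; the M\"obius map from the exterior of that disc onto $\mathbb D$, post-composed with an automorphism of $\mathbb D$ sending the image of $i$ to $0$, supplies the required injective $f$. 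A short calculation with M\"obius transformations then shows $|f'(i)| \to 1/2$, completing the lower estimate.

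Most of the argument is routine once the scaling is in place. The one point that needs a little care is the monotonicity of $q$ together with the area-formula remark, both of which rest on the particular normalization $\|\phi\|_1 \le \pi$ in the definition of $q_R$; beyond that I do not anticipate a deeper obstacle. If one preferred to extract the convergence $q_{T_n(D)}(i) \to 1/2$ directly from a Carath\'eodory-type semicontinuity of $q$, that would require Sugawa's continuity properties for the extremal differential, but the two-sided estimate above already suffices for Theorem~\ref{T:SugawaMetric}.
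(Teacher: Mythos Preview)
Your argument is correct and takes a genuinely more direct route than the paper. The paper proceeds by first establishing the full convergence $q_{D_j}\to q_{\mathcal H}$ uniformly on compacta (Proposition~\ref{Pr:ConvergenceOfTheSugawaMetricUnderScaling}): the $\limsup$ inequality comes from a normal-family argument on the extremal differentials $\phi_{k_j}$ and checking that any limit still has $L^1$-norm $\le\pi$, while the $\liminf$ inequality is obtained by restricting the explicit half-plane extremal $\phi_{\mathcal H}$ to $D_j$ and controlling $\int_{D_j}|\phi_{\mathcal H}|$ via a dominated-convergence computation (Lemma~\ref{L:ConvergenceOfIntegral}). Your sandwich bypasses both steps: monotonicity of $q$ under inclusion plus an interior tangent disc gives the upper bound, and the single observation that $(f')^2$ is admissible for any univalent $f:T_n(D)\to\mathbb D$ (area formula) plus an exterior tangent disc gives the lower bound. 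Both methods in fact yield the same sharp limit $1/2$. What the paper's route buys is the uniform convergence on compacta, which it reuses in Section~7 for the generalized-curvature estimates; what your route buys is a self-contained, elementary proof of Theorem~\ref{T:SugawaMetric} that needs neither the explicit form of $\phi_{\mathcal H}$ nor any normal-family machinery.
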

Here and in what follows, we use the standard convention that $A \approx B$ means that there is a constant $C > 1$ such that $A/B, B/A$ are both bounded 
above by $C$. In particular, this statement shows that the metric 
$q_D(z) \vert dz \vert$ is comparable to the quasi-hyperbolic metric near $C^2$-smooth points. Thus, if $D$ is globally $C^2$-smooth, then $q_D(z) \vert dz \vert$ is comparable to the quasi-hyperbolic metric everywhere on $D$.

\subsection{\textit{The Hurwitz metric}:}
The other conformal metric that we will discuss here is the Hurwitz metric that has been recently defined by Minda \cite{TheHurwitzMetric}. We begin by recalling its construction which is reminiscent of that for the Kobayashi metric but differs from it in the choice of holomorphic maps which are considered: for a domain $D \subset \mathbb C$ and $a \in D$, let $\mathcal{O}(a, D)$ be the collection of all holomorphic maps $f : \mathbb{D} \rightarrow D$ such that $f(0) = a$ and $f'(0) > 0$. Let $\mathcal{O}^{\ast}(a, D) \subset \mathcal{O}(a, D)$ be the subset of all those $f \in \mathcal{O}(a, D)$ such that $f(z) \not= a$ for all $z$ in the punctured disc $\mathbb{D}^{\ast}$. Set
\[
r_D(a) = \sup \left\lbrace f'(0) : f \in \mathcal{O}^{\ast}(a, D) \right\rbrace.
\]
The Hurwitz metric on $D$ is $\eta_D(z) \vert dz \vert$ where
\[
\eta_D(a) = 1/r_D(a).
\]
Of the several basic properties of this conformal metric that were explored in \cite{TheHurwitzMetric}, we recall the following two: first, for a given $a \in D$, let $\gamma \subset D^{\ast} = D\setminus \{a\}$ be a small positively oriented loop that goes around $a$ once. This loop generates an infinite cyclic subgroup of
$\pi_1(D^{\ast})$ to which there is an associated holomorphic covering $G : \mathbb{D}^{\ast} \rightarrow D^{\ast}$. This map $G$ extends holomorphically to $G : \mathbb{D} \rightarrow D$ with $G(0) = a$ and $G'(0) \not= 0$. This covering depends only on the free homotopy class of $\gamma$ and is unique up to precomposition of a rotation around the origin. Hence, it is possible to arrange $G'(0) > 0$. Minda calls this the Hurwitz covering associated with $a \in D$. Using this it follows that every $f \in  \mathcal{O}^{\ast}(a, D)$ lifts to $\tilde f : \mathbb{D}^{\ast} \rightarrow \mathbb{D}^{\ast}$. This map extends to a self map of $\mathbb{D}$ and the Schwarz lemma shows that $\vert f'(0) \vert \le G'(0)$. The conclusion is that the extremals for this metric can be described in terms of the Hurwitz coverings. 

\medskip


\begin{thm}\label{T:TheHurwitzMetric}
Let $D \subset \mathbb{C}$ be bounded. Then $\eta_D(z)$ is continuous. Furthermore, if $p \in \partial D$ is a $C^2$-smooth boundary point, then
\[
\eta_D(z) \approx 1/{\rm dist} (z, \partial D)
\]
for $z$ close to $p$.
\end{thm}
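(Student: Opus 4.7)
The plan is to address the two parts of the statement — continuity of $\eta_D$ on the bounded domain $D$, and the asymptotic $\eta_D(z) \approx 1/\delta(z)$ near a $C^2$-smooth point $p$, where $\delta(z) := \operatorname{dist}(z, \partial D)$ — starting with the asymptotic, as it is the part most directly aligned with the paper's scaling philosophy. For the upper bound the key observation is the monotonicity of $\eta_D$ under inclusion: if $D_1 \subset D_2$, then any $f \in \mathcal{O}^*(a, D_1)$ lies in $\mathcal{O}^*(a, D_2)$, hence $r_{D_1}(a) \leq r_{D_2}(a)$ and $\eta_{D_1} \geq \eta_{D_2}$. Combined with $\eta_{\mathbb{D}}(0) = 1$ — the identity is extremal in $\mathcal{O}^*(0, \mathbb{D})$ by the Schwarz lemma — and conformal invariance of $\eta$ under $w \mapsto (w - z)/\delta(z)$, the inclusion $B(z, \delta(z)) \subset D$ yields $\eta_D(z) \leq \eta_{B(z, \delta(z))}(z) = 1/\delta(z)$.

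For the lower bound I would use $\eta_D \geq k_D$, which is immediate from $\mathcal{O}^*(a, D) \subset \mathcal{O}(a, D)$, where $k_D$ denotes the Kobayashi infinitesimal metric. On a planar hyperbolic domain $k_D$ coincides with the hyperbolic metric $\lambda_D$, and the classical estimate $\lambda_D(z) \gtrsim 1/\delta(z)$ at $C^2$-smooth boundary points — provable in its own right through the scaling principle, by rescaling $D$ near $p$, passing to a half-plane limit, and invoking stability of $\lambda$ under Carath\'{e}odory convergence — then gives $\eta_D(z) \gtrsim 1/\delta(z)$. Combined with the upper bound this yields the desired two-sided estimate.

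For continuity on $D$ the plan is a normal-families argument in the spirit of the continuity proofs for the Carath\'{e}odory and Kobayashi metrics. Given $a_n \to a_0$ in $D$, the Hurwitz extremals $G_n \in \mathcal{O}^*(a_n, D)$ with $G_n'(0) = r_D(a_n)$ form a normal family since $D$ is bounded. A subsequential locally uniform limit $G$ is non-constant (the test map $z \mapsto a_n + (\rho/2)z$ for a fixed disc $B(a_0, \rho) \subset D$ gives $r_D(a_n) \geq \rho/2$ for all large $n$), and hence maps $\mathbb{D}$ into $D$ by the open mapping theorem. Hurwitz's theorem applied to $G_n - a_n$, non-vanishing on $\mathbb{D}^*$, forces $G - a_0$ non-vanishing on $\mathbb{D}^*$, so $G \in \mathcal{O}^*(a_0, D)$ with $G'(0) \leq r_D(a_0)$; this yields lower semicontinuity of $\eta_D$. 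Upper semicontinuity would follow from continuous dependence of the Hurwitz covering $G_a$ on its puncture $a$, reducing ultimately to continuous variation of the uniformizing covering of the family $\{D \setminus \{a\}\}_{a \in D}$.

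The main obstacle is this last step: the naive attempt of precomposing $G_{a_0}$ with a small M\"{o}bius automorphism of $\mathbb{D}$ carrying $0$ to a local preimage $z_n$ of $a_n$ under $G_{a_0}$ produces a map in $\mathcal{O}(a_n, D)$ but generically not in $\mathcal{O}^*(a_n, D)$, since the covering $G_{a_0} : \mathbb{D}^* \to D \setminus \{a_0\}$ has further preimages of $a_n$ in $\mathbb{D}^*$ that introduce spurious zeros. The boundary asymptotic, by contrast, is largely formal once monotonicity of $\eta_D$ and the comparison $\eta_D \geq \lambda_D$ are in hand, with the scaling principle entering only indirectly through its role in the lower bound for $\lambda_D$.
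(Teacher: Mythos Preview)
Your treatment of the asymptotic is correct and considerably more elementary than the paper's. The paper applies the scaling machinery directly to the Hurwitz metric: it shows that the normalized Hurwitz coverings of the scaled domains $D_j = T_j(D)$ converge to the Hurwitz covering of the limiting half-plane $\mathcal H$ (via a lifting argument and Hejhal's theorem on universal covers of variable regions), deduces $\eta_{D_j}\to\eta_{\mathcal H}$ locally uniformly, and hence obtains the sharp limit $\lim_{z\to p}\eta_D(z)(-\psi(z))=\eta_{\mathcal H}(0)=|\omega|/2$. Your monotonicity-plus-comparison argument ($\eta_{B(z,\delta(z))}\ge\eta_D\ge\lambda_D$) gives only the two-sided estimate $\eta_D\approx 1/\delta$, but that is exactly what the theorem asserts, so nothing is lost for the stated result; indeed the paper itself invokes the bilipschitz bound $1/8\delta_D\le\eta_D\le 2/\delta_D$ from Minda as a known input. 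What your route forfeits is the precise boundary constant.

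The continuity argument, however, has a genuine gap that you yourself flag. Your lower semicontinuity step is fine, and you correctly locate the difficulty in upper semicontinuity: one cannot manufacture competitors in $\mathcal O^*(a_n,D)$ by perturbing the extremal $G_{a_0}$, because $G_{a_0}$ hits $a_n$ over $\mathbb D^*$. The paper's resolution is precisely the ``continuous variation of the uniformizing covering'' you gesture at but do not carry out. Concretely: fix a base point $p$ near $a$ (chosen, via a quantitative inverse function theorem, to lie in a disc on which all $G_n$ are simultaneously invertible), let $\pi_n:\mathbb D\to D\setminus\{a_n\}$ be the normalized universal covers at $p$, and lift to obtain coverings $\tilde\pi_n:\mathbb D\to\mathbb D^*$ with $G_n\circ\tilde\pi_n=\pi_n$. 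Hejhal's theorem gives $\pi_n\to\pi_0$; a separate lemma shows any non-constant limit of the $\tilde\pi_n$ is again a covering $\mathbb D\to\mathbb D^*$; passing to limits in the factorization then forces any subsequential limit $\tilde G$ of $\{G_n\}$ to satisfy $\tilde G\circ\tilde\pi_0=\pi_0$, so $\tilde G:\mathbb D^*\to D\setminus\{a\}$ is a covering with $\tilde G(0)=a$ and $\tilde G'(0)>0$, hence $\tilde G=G$. This yields $G_n'(0)\to G'(0)$ and full continuity. Without this covering-factorization idea (or an equivalent), your proof of continuity is incomplete.
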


A consequence of Theorem 1.3 and 1.4 is that both $q_D(z) \vert dz \vert$ and $\eta_D(z) \vert dz \vert$ are equivalent metrics near smooth boundary points.
\medskip

Finally, in section~7, we provide some estimates for the generalized curvatures of $ q_D(z) \vert dz \vert $ and $ \eta_D(z)\vert dz \vert $.

\section{Scaling of planar domains}
\noindent
The scaling principle for planar domains has been described in detail in \cite{ScalingInHigherDimensionKrantzKimGreen}. A simplified version which suffices for the applications presented later can be described as follows:
\medskip 
\\
Let $ D $ be a domain in $ \C $ and $ p \in \partial D $ a $ C^2 $-smooth boundary point. This means that there is a neighborhood $ U $ of $ p $ and a $ C^2 $-smooth real function $ \psi $ such that  
\[
U \cap D = \{\psi < 0\}, \quad U \cap \partial D = \{\psi = 0\} 
\]  
and 
\[
d \psi \neq 0 \quad \text{on} \quad U \cap \partial D.
\]
Let $ p_j $ be a sequence of points in $ D $ converging to $ p $. Suppose $ \tau(z)\vert dz \vert $ is a conformal metric on $ D $ whose behaviour near $ p $ is to be studied. The affine maps 

\begin{equation}\label{Eq:ScalingMap}
T_j(z) = \frac{z - p_j}{-\psi(p_j)}
\end{equation}
satisfy $ T_j(p_j) = 0 $ for all $ j $ and since $ \psi(p_j) \to 0$, it follows that the $ T_j $'s expand a fixed neighborhood of $ p $. To make this precise write
\[
\psi(z) = \psi(p) + 2Re\left( \frac{\partial \psi}{\partial z}(p)(z - p)\right)  +  o(\vert z - p\vert)
\] 
in a neighborhood of $ p $. Let $ K $ be a compact set in $ \C $. Since $ \psi(p_j) \to 0 $, it follows that $ T_j(U) $ is an increasing family of open sets that exhaust $ \C $ and hence $ K \subset T_j(U) $ for all large $ j $. The functions, by taking their Taylor series expansion at $ z = p_j $, 
\[
\psi \circ T^{-1}_{j}(z) = \psi \left( p_j + z\left( -\psi(p_j)\right) \right) 
= \psi(p_j) + 2Re\left( \frac{\partial \psi}{\partial z}(p_j)z \right) (-\psi(p_j)) + \psi(p_j)^2 o(1) 
\] 
are therefore well defined on $ K $ and the domains $ D_j' = T_j(U \cap D) $ are defined by 
\[
\psi_j(z) = \frac{1}{-\psi(p_j)}\psi \circ T_j^{-1}(z) = -1 + 2Re\left( \frac{\partial \psi}{\partial z}(p_j)z \right) + (-\psi(p_j))o(1).  
\]
It can be seen that $ \psi_j(z) $ converges to 
\[
\psi_{\infty}(z) = -1 + 2Re\left( \frac{\partial \psi}{\partial z}(p)z \right) 
\]
uniformly on $ K $ as $ j \to \infty $. At this stage, let us recall the Hausdorff metric on subsets of a metric space.
\medskip

Given a set $ S \subseteq \C^n $, let $ S_{\epsilon} $ denote the $ \epsilon $-neighborhood of $ S $ with respect to the standard Euclidean distance on $ \C^n $. The Hausdorff distance between compact sets $ X, Y \subset \C^n $ is given by 
\[
d_H(X,Y) = \inf\{\epsilon > 0 : X \subset Y_{\epsilon}\,\, \text{and} \,\, Y \subset X_{\epsilon}\}. 
\]
It is known that this is a complete metric space on the space of compact subsets of $ \C^n $. To deal with non-compact but closed sets there is a topology arising from a family of local Hausdorff semi-norms. It is defined as follows: fix an $ R > 0 $ and for a closed set $ A \subset \C^n $, let $ A^R = A \cap \overline B(0, R)$ where $ B(0, R) $ is the ball centred at the origin with radius $ R $. Then, for $ A, B \subset \C^n$, set
\[
d_H^{(R)}(A, B) = d_H\left(A^R, B^R\right).
\]
We will say that a sequence of closed sets $ A_n $ converges to a closed set $ A $ if there exists $ R_0 $ such that 
\[
\lim_{n \to \infty}d_H\left(A_n^R, A^R\right) = 0
\]
for all $ R \geq R_0 $. Since $ \psi_j \to \psi_{\infty} $ uniformly on every compact subset in $ \C $, it follows that the closed sets $ \overline{D_j'} = T_j(\overline{U \cap D})$  converge to  the closure of the half-space 
\[
\h = \{z : -1 +  2Re\left( \frac{\partial \psi}{\partial z}(p)z \right) < 0 \} = \{z: Re(\overline{\omega}z-1)<0\}
\]
where $\omega = (\partial\psi/\partial x)(p) + i(\partial\psi/\partial y)(p)$, in the Hausdorff sense as described above. As a consequence, every compact $ K \subset \h $ is eventually contained in $ D_j' $. Similarly, every compact $ K \subset \C \setminus \overline{\h} $ eventually has no intersection with $ \overline{D_j'} $. 
\medskip

It can be seen that the same property holds for the domains $ D_j = T_j(D) $, i.e, they converge to the half-space $ \h $ in the Hausdorff sense.
\medskip

Now coming back to the metric $ \tau(z)\vert dz \vert $, the pull-backs
\[
\tau_j(z) \vert dz \vert = \tau_D(T_j^{-1}(z)) \vert (T_j^{-1})^{\prime}(z)\vert \vert dz \vert
\]
are well-defined conformal metrics on $ D_j $ which satisfy 
\[
\tau_j(0) \vert dz \vert = \tau_D(p_j)(-\psi(p_j))\vert dz \vert.
\]
Therefore, to study $ \tau(p_j) $, it is enough to study $ \tau_j(0) $. This is exactly what will be done in the sequel.

\section{Proof of Theorem~\ref{T:HigherCurvature}}


It is known (see \cite[section~19.3]{InvariantMetricJarnicki} for example) that if $ D \subset \C $ is bounded and $ p \in \partial D $ is a $ C^2 $-smooth boundary point, then 
\[
\lim_{z \to p}\frac{c_{U \cap D}(z)}{c_D(z)} = 1
\]
where $ U $ is a neighborhood of $ p $ such that $ U \cap D $ is simply connected. Here is a version of this statement that we will need:
\begin{prop}
	Let $ D \subset \C $ be a bounded domain and $ p \in \partial D $ a $ C^2 $-smooth boundary point. Let $ U $ be a neighborhood of $ p $ such that $ U \cap D $ is simply connected. Let $ \psi $ be a defining function of $\partial D $ near the point $ p $. Then 
	\[
	\lim_{z \to p}c_{D}(z)(-\psi(z)) = c_{\h}(0).
	\]
\end{prop}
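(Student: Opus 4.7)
The plan is to reduce the statement to a convergence of hyperbolic densities on simply connected domains and then invoke the Carath\'eodory kernel theorem.

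First, let $p_j \to p$ be an arbitrary sequence in $D$ and apply the scaling maps $T_j(z) = (z-p_j)/(-\psi(p_j))$ from \eqref{Eq:ScalingMap}. Since the Carath\'eodory density is a conformal invariant and $|T_j'| = 1/(-\psi(p_j))$, we get the clean identity
\[
c_D(p_j)(-\psi(p_j)) = c_{D_j}(0), \qquad D_j = T_j(D),
\]
and analogously $c_{U\cap D}(p_j)(-\psi(p_j)) = c_{D_j'}(0)$ for $D_j' = T_j(U \cap D)$. Because the cited localization result gives $c_{U\cap D}(p_j)/c_D(p_j) \to 1$, it suffices to prove that $c_{D_j'}(0) \to c_{\h}(0)$.

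The point of passing from $D_j$ to $D_j'$ is that $U \cap D$ is simply connected by hypothesis, so each $D_j'$ is simply connected, and therefore $c_{D_j'} = \lambda_{D_j'}$ (Carath\'eodory and hyperbolic densities coincide). The half-plane $\h$ is likewise simply connected and $c_{\h} = \lambda_{\h}$. Note also that $0 = T_j(p_j) \in D_j'$ for all $j$, and $0 \in \h$ since $\psi_\infty(0) = -1$, so the base points are all available.

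Next I would verify that $\{D_j'\}$ converges to $\h$ in the Carath\'eodory kernel sense with respect to the base point $0$. Section~2 records exactly what is needed: every compact $K \subset \h$ lies in $D_j'$ for all large $j$, and every compact set in $\mathbb C \setminus \overline{\h}$ is eventually disjoint from $\overline{D_j'}$. These two properties identify $\h$ as the kernel of $\{D_j'\}$ at $0$, and the same holds for every subsequence.

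Finally I would apply the Carath\'eodory kernel theorem: letting $\varphi_j \colon \D \to D_j'$ be the Riemann maps normalized by $\varphi_j(0)=0$ and $\varphi_j'(0)>0$, kernel convergence gives $\varphi_j \to \varphi$ locally uniformly on $\D$, where $\varphi \colon \D \to \h$ is the analogously normalized Riemann map. Consequently $\varphi_j'(0) \to \varphi'(0)$ and
\[
c_{D_j'}(0) = \lambda_{D_j'}(0) = 1/\varphi_j'(0) \longrightarrow 1/\varphi'(0) = \lambda_{\h}(0) = c_{\h}(0).
\]
Combined with the localization step this yields $c_D(p_j)(-\psi(p_j)) \to c_{\h}(0)$, and since $\{p_j\}$ was arbitrary this is the desired limit. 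The only non-cosmetic point in the argument is confirming that the local Hausdorff convergence recorded in Section~2 really does translate into kernel convergence at $0$; this is where the explicit ``interior'' and ``exterior'' properties of the scaled neighbourhoods noted in the scaling section are used, and it is the step one must state carefully, though no real work is required beyond citing those properties.
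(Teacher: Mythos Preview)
Your proof is correct and follows essentially the same route as the paper: scale along an arbitrary sequence $p_j\to p$, pass to the simply connected domains $D_j'=T_j(U\cap D)$, identify the Carath\'eodory density there with the hyperbolic density, invoke Carath\'eodory kernel convergence of the normalized Riemann maps to obtain $c_{D_j'}(0)\to c_{\h}(0)$, and finish via the localization $c_{U\cap D}/c_D\to 1$. The only cosmetic difference is that the paper states the convergence $c_{D_j'}(z)\to c_{\h}(z)$ for a general $z\in\h$ before specializing to $z=0$, whereas you work directly at the base point.
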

\begin{proof}
	Let $ \{p_j\} $ be a sequence in $ D $ which converges to $ p $. Consider the affine map  
	\[
	T_j(z) = \frac{z - p_j}{-\psi(p_j)}
	\]
	whose inverse is given by
	\[
	T_j^{-1}(z) = -\psi(p_j)z + p_j.
	\]
	Let $D_j = T_j(D)$ and $ D'_j = T_j(\du) $. 
	Note that $ \{D_j\} $ and $ D'_j $ converge to the half-space $ \h $ both in the Hausdorff and Carath\'eodory kernel sense. 
	\medskip
	
	Let $z \in \h$. Then $z \in D'_j$ for $j $ large.  Since $D'_j$ is a simply connected, there is a biholomorphic map $f_j : \mathbb{D} \lrarw D'_j$ with $f_j(0) = z$ and $f_j^{\prime}(0) > 0$. The domain $  D'_j $ converges to the half-space $ \h $ and therefore the Carath\'eodory kernel convergence theorem (see \cite{CaratheodoryKernelConvergence} for example), shows that $f_j$ admits a holomorphic limit $f : \mathbb{D} \lrarw \h$ which is a biholomorphism. Note that $f(0) = z$ and $f'(0) > 0$. We know that in the case of  simply connected domains, the Carath\'eodory and hyperbolic metric coincide and so 
	\[
	c_{D'_j}(z) = \lambda_{D'_j}(z) 
	\]
	for all large $ j $ and hence
	\[
	c_{\h}(z) = \lambda_{\h}(z).
	\]
	It is known that 
	\[
	\lambda_{D'_j}(z) = \frac{1}{f_j^{\prime}(0)} \,\, \mbox{and} \,\, \lambda_{\h}(z) = \frac{1}{f^{\prime}(0)}.
	\]
	From this we conclude that $c_{D'_j}(z)$ converges to $c_{\h}(z)$ as $j \to \infty$.
	\medskip
	
	Under the biholomorphism $ T_j^{-1} $, the pull back metric
	\[
	(T_j^{-1})^*(c_{\du})(z) = c_{D'_j}(z)
	\]
	for all $ z \in D_j$. That is
	\[
	c_{\du}(T_j^{-1}(z))\vert (T_j^{-1})^{\prime}(z)\vert  = c_{D'_j}(z).
	\]
	Putting $ z = 0 $, we obtain
	\[
	c_{\du}(p_j)(-\psi(p_j)) = c_{D'_j}(0).
	\]
	As we have seen above $ c_{D'_j}(z) $ converges to $ c_{\h}(z) $, for all $ z \in \h $, as $ j \to \infty $. Therefore, $ c_{\du}(p_j)(-\psi(p_j)) $, which is equal to $ c_{D_j}(0) $,  converges to $ c_{\h}(0) $ as $ j \to \infty $. Since $ \{p_j\} $ is an arbitrary sequence, we conclude that
	\[
	\lim_{z \to p}c_{\du}(z)(-\psi(z)) = c_{\h}(0).
	\]
	Since
	\[
	\lim_{z \to p}\frac{c_{U \cap D}(z)}{c_D(z)} = 1,
	\]
	we get
	\[
	\lim_{z \to p}c_{D}(z)(-\psi(z)) = c_{\h}(0).
	\]
\end{proof}  
\medskip

The Ahlfors map, the Szeg\"o kernel and the Garabedian kernel of the half space 
 
\[
\h = \{z : Re(\bar \omega z - 1) < 0\} 
\]
at $ a \in \h $
are given by
\[
f_{\h}(z, a) = \vert \omega \vert\frac{z - a}{2 - \omega \bar a - \bar \omega z},
\]
\[
S_{\h}(z, a) = \frac{1}{2 \pi}\frac{\vert \omega \vert}{2 - \omega \bar a - \bar \omega z}
\]
and
\[
L_{\h}(z, a) = \frac{1}{2 \pi}\frac{1}{z - a}
\]
respectively.
\medskip

Let $ D $ be a $ C^{\infty} $-smooth bounded domain. Choose $ p \in \partial D $ and a sequence $ p_j $ in $ D $ that converges to $ p $. The sequence of scaled domains $ D_j = T_j(D) $, where $ T_j $ are as in \ref{Eq:ScalingMap}, converges to the half-space $ \h $ as before.
\medskip

Fix $ a \in \h $ and note that $ a \in D_j $ for $ j $ large. Note that $ 0 \in D_j $ for $ j \geq 1 $. Let $ f_j(z, a) $ be the Ahlfors map such that $ f_j(a, a) = 0 $, $ f_j^{\prime}(a, a) > 0 $ and suppose that $ S_j(z, a) $ and $ L_j(z, a) $ are the Szeg\"o and Garabedian kernels for $ D_j $ respectively.
\begin{prop}\label{Prop:ConvergenceAhlforsSzegoGarabeidian}
	In this situation, the sequence of Ahlfors maps $ f_j(z, a) $ converges to $ f_{\h}(z, a) $ uniformly on compact subsets of $ \h $. The Szeg\H{o} kernels $ S_j(z, a) $ converge to $ S_{\h}(z, a) $ uniformly on compact subsets of $ \h $. Moreover, $ S_j(z, w) $ converges to the  $ S_{\h}(z, w) $ uniformly on every compact subset of $ \h \times \h $. Finally, the Garabedian kernels $ L_j(z, a) $ converges to $ L_{\h}(z, a) $ uniformly on compact subsets of $ \h \setminus \{a\} $.
\end{prop}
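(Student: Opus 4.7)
The plan is to establish the three convergences in sequence, exploiting the identities $f_D(z,a) = S_D(z,a)/L_D(z,a)$ and $2\pi S_D(z,z) = f_D'(z,z) = c_D(z)$ together with the Carath\'eodory density convergence from Proposition~3.1.

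I would first handle the Ahlfors maps by a normal-family argument. On any compact $K \subset \h$, the bound $|f_j(\cdot, a)| \le 1$ makes $\{f_j(\cdot, a)\}$ (which is defined on $K$ for $j$ large since $K \subset D_j$ eventually) a normal family by Montel's theorem. Any subsequential limit $f$ is holomorphic on $\h$, satisfies $f(a) = 0$ and $|f| \le 1$. Running Proposition~3.1 with the sequence $\widetilde p_j = T_j^{-1}(a) = p_j + a(-\psi(p_j)) \to p$, and tracking the adjustment factor $(-\psi(p_j))/(-\psi(\widetilde p_j)) \to 1/(-\psi_\infty(a))$ from the Taylor expansion of $\psi$, yields $f_j'(a,a) = c_{D_j}(a) \to c_\h(a) > 0$. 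So $f$ is non-constant, hence maps $\h$ into $\mathbb{D}$ by the maximum principle, and realizes the extremal derivative $f'(a) = c_\h(a) = f_\h'(a,a)$. Uniqueness of the Ahlfors map as the normalized extremal forces $f = f_\h(\cdot, a)$, and the entire sequence converges locally uniformly.

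For the Szeg\H{o} kernels, the same scaling argument delivers the locally uniform convergence $c_{D_j}(z) \to c_\h(z)$ on $\h$ (uniformity in $z$ over compacts is automatic since $T_j^{-1}(z) \to p$ uniformly for $z$ in any compact $K \subset \h$), so $S_j(z,z) = c_{D_j}(z)/(2\pi) \to S_\h(z,z)$ locally uniformly. The Cauchy--Schwarz estimate for reproducing kernels
\[
|S_j(z,w)|^2 \le S_j(z,z)\, S_j(w,w)
\]
then makes $\{S_j\}$ locally uniformly bounded on $\h \times \h$. I would extract a subsequential limit $\widetilde S(z,w)$, holomorphic in $z$ and anti-holomorphic in $w$; since $\widetilde S(z,z) = S_\h(z,z)$ and a sesqui-holomorphic function is uniquely determined by its diagonal trace (expanding $\widetilde S(z,w) - S_\h(z,w)$ around any base point and setting $w = z$ kills every Taylor coefficient), $\widetilde S \equiv S_\h$. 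A standard subsequence argument then promotes this to convergence of the full sequence uniformly on compact subsets of $\h \times \h$; specializing to $w = a$ gives the claim for $S_j(\cdot, a)$. The Garabedian step follows by writing $L_j(z,a) = S_j(z,a)/f_j(z,a)$: on any compact $K \subset \h \setminus \{a\}$, $f_\h(\cdot, a)$ is bounded below in modulus, so $f_j \to f_\h$ uniformly on $K$ forces $|f_j|$ to be uniformly bounded below on $K$ for $j$ large (any extra zeros of $f_j$ coming from the higher connectivity of $D_j$ must exit $K$), and combined with the Szeg\H{o} convergence this gives $L_j \to L_\h$ uniformly on $K$.

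The step I expect to be the main obstacle is the identification of the Szeg\H{o} limit. Unlike the Ahlfors map, which is characterized by an extremal problem whose uniqueness passes cleanly to the limit, the Szeg\H{o} kernel is a reproducing kernel on Hardy spaces whose underlying boundaries $\partial D_j$ vary with $j$, so its defining reproducing property does not transfer directly to $\partial \h$. The route sketched above sidesteps this by reducing to the diagonal via Cauchy--Schwarz and then invoking the sesqui-holomorphic identity principle; transferring the reproducing property through the Hausdorff limit of the boundaries would be considerably more delicate, particularly since $\partial \h$ is unbounded.
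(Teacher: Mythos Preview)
Your proposal is correct and follows essentially the same route as the paper: normal-family extraction plus uniqueness of the Ahlfors extremal for $f_j$, Cauchy--Schwarz to control $S_j$ off-diagonal together with the polarization/identity-principle argument that a sesqui-holomorphic function is determined by its diagonal, and finally the quotient $L_j = S_j/f_j$ for the Garabedian kernel. Your handling of $c_{D_j}(a) \to c_{\h}(a)$ by re-running Proposition~3.1 along the shifted sequence $\widetilde p_j = T_j^{-1}(a)$ is slightly more explicit than the paper's, which simply cites the previous proposition, but this is a presentational rather than a substantive difference.
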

\begin{proof}
	In the proof of the previous proposition we have seen that $ c_{D_j}(a) $ converges $ c_{\h}(a) $ as $ j \to \infty $. By definition $f_j^{\prime}(a, a) = c_{D_j}(a)$ and $ f_{\h}^{\prime}(a, a) = c_{\h}(a) $. Therefore,  $f_j^{\prime}(a, a)$ converges to $f_{\h}^{\prime}(a, a)$ as $ j \to \infty $.
	\medskip
	
	Now, we shall show that $ f_j(z, a) $ converges to $f_{\h}(z, a)$ uniformly on compact subsets of $\h$. Since the sequence of the Ahlfors maps $\{f_j(z, a)\}$ forms a normal family of holomorphic functions and $ f_j(a, a) = 0 $, there exists a subsequence $\{f_{k_j}(z, a)\}$ of $\{f_j(z, a)\}$ that converges to a holomorphic function $f$ uniformly on every compact subset of the half-space $\h$. Then $f(a) = 0$ and, as $f_j^{\prime}(a)$ converges to $f_{\h}^{\prime}(a, a)$, so we have $f^{\prime}(a) = f_{\h}^{\prime}(a, a)$. Thus, we have $f: \h \longrightarrow \mathbb{D}$ such that $f(a) = 0$ and $f^{\prime}(a) = f_{\h}^{\prime}(a, a)$. By the uniqueness of the Ahlfors map, we conclude that $f(z) = f_{\h}(z, a)$ for all $ z \in \h $. Thus, from above, we see that every limiting function of the sequence $\{f_j(z, a)\}$ is equal to $f_{\h}(z, a)$. Hence, we conclude that $\{f_j(z, a)\}$ converges to $f_{\h}(z, a)$ uniformly on every compact subset of $ \h $. \smallskip

	Next, we shall show that $ S_j(\zeta, z) $ converge to $ S_{\h}(\zeta, z) $ uniformly on compact subsets of $\h \times \h$. First, we show that $\sdj(\zeta, z)$ is locally uniformly bounded. Let $z_0, \,\zeta_0 \in \h$ and choose $ r_0 > 0 $ such that the closed balls $ \overline{B}(z_0, r_0) $, $ \overline{B}(\zeta_0, r_0) \subset \h $. Since $ D_j $ converges to $ \h $, $ \overline{B}(z_0, r_0) $, $ \overline{B}(\zeta_0, r_0) \subset D_j $ for $ j $ large. By the monotonicity of the Carath\'eodory metric
	\[
	\Cdj(z) \leq \frac{r_0}{r_0^2 - |z|^2}
	\]
	for all $ z \in B(z_0,r_0)$, and 
	\[
	\Cdj(\zeta) \leq \frac{r_0}{r_0^2 - |\zeta|^2}
	\] 
	for all  $ \zeta \in  B(\zeta_0,r_0) $ and for $j$ large. Therefore, if $ 0 < r < r_0 $ and $ (z, \zeta)  \in \overline{B}(z_0, r_0) \times \overline{B}(\zeta_0, r_0) $, it follows that
	\[
	\Cdj(z) \leq \frac{r_0}{r_0^2 - r^2}
	\]
	and 
	\[
	\Cdj(\zeta) \leq \frac{r_0}{r_0^2 - r^2}
	\] 
	for $j$ large. Using the fact that $\Cdj(z) = 2\pi\sdj(z,z)$, we have
	\[
	\sdj(z, z) \leq \frac{1}{2 \pi} \frac{r_0}{r_0^2 - r^2}
	\]
	for all $ z \in \overline{B}(z_0,r) $ and 
	\[
	\sdj(\zeta, \zeta) \leq \frac{1}{2 \pi} \frac{r_0}{r_0^2 - r^2}
	\]
	for all $ \zeta \in \overline{B}(\zeta_0,r) $ and for $j$ large. By the Cauchy-Schwarz inequality,
	\[
	| \sdj(\zeta, z) |^2 \leq | \sdj(\zeta, \zeta)| | \sdj(z, z)|
	\]
	which implies
	\[
	| \sdj(\zeta, z) | \leq \frac{1}{2 \pi} \frac{r_0}{r_0^2 - r^2}
	\]
	for all  $ (\zeta, z) \in \overline{B}(\zeta_0,r) \times \overline{B}(z_0,r) $, and for all $ j \geq 1 $. 
	This shows that $\sdj(\zeta, z)$ is locally uniformly bounded. Hence the sequence $\{\sdj(\zeta,z)\}$, as a holomorphic function in the two variables $\zeta, \overline{z}$ is a normal family. Now, we claim that the sequence $\{\sdj(\zeta,z)\}$ converges to a unique limit. Let $S$ be a limiting function of $\{\sdj(\zeta,z)\}$. Since $S$ is holomorphic in $\zeta, \overline{z}$, the power series expansion of the difference $S - S_{\h}$ around the point $0$ has the form
	\[ 
	S(\zeta, z) - S_{\h}(\zeta, z) = \sum_{l,k =0}^{\infty}a_{l,k}\zeta^l\overline{z}^k. 
	\]   
	Recall that $2\pi S_{j}(z,z) = \Cdj(z)$ converges to $c_{\h}(z)$ as $ j \to \infty $ and $2\pi S_{\h}(z,z) = c_{\h}(z)$. From this we infer that $S(z, z) = S_{\h}(z, z)$ for all $ z \in \h $ and hence
	\[
	\sum_{l,k =0}^{\infty}a_{l,k}z^l\overline{z}^k = 0.
	\]
	By substituting $z = |z|e^{i\theta}$ in the above equation, we get
	\[
	\sum_{l,k =0}^{\infty}a_{l,k}|z|^{(l + k)}e^{i(l - k)\theta} = 0.
	\]
    and hence 
	\[
	\sum_{l +k = n}a_{l,k}e^{i(l - k)\theta} = 0 
	\]
	for all $ n \geq 1$. It follows that  $a_{l,k} = 0$ for all $l, k \geq 1 $. Hence we have 
	\[ 
	S(\zeta, z) = S_{\h}(\zeta, z)
	\] 
	for all $ \zeta, z \in \h $. So any limiting function of $\{\sdj(\zeta,z)\}$ is equal to $ S_{\h}(\zeta,z) $. This shows that $\{\sdj(\zeta,z)\}$ converges to $ S_{\h}(\zeta,z) $ uniformly on compact subsets of $ \h \times \h $.
	\medskip
	
	Finally, we show that the sequence of the Garabedian kernel functions $\{L_j(z, a)\}$ also converges to the Garabedian kernel function $ L_{\h}(z, a) $ of the half-space $ \h $ uniformly on every compact subset of $\h \setminus \{a\}$. This will be done by showing that $\{L_j(z, a)\}$ is a normal family and has a unique limiting function. To show that $\{L_j(z, a)\}$ is a normal family, it is enough to show that $L_j(z, a)$ is locally uniformly bounded on $ \h \setminus \{a\} $.
	\medskip
	 
	Since $ z = a $ is a zero of $ f_{\h}(z, a) $, it follows that for an arbitrary compact set $ K \subset \h \setminus \{a\} $, the infimum of $ \vert f_{\h}(z, a) \vert $ on $ K $ is positive. Let $ m > 0 $ be this infimum. As $ f_j(z, a) \to f_{\h}(z, a)$ uniformly on $ K $, 
	\[
	\vert f_j(z, a) \vert > \frac{m}{2}
	\] and hence 
	\[
	\frac{1}{\vert f_j(z, a) \vert} < \frac{2}{m} 
	\]
	for all $ z \in K $ and $ j $ large. Since $ S_j(z, a) $ converges to $ S_{\h}(z, a) $ uniformly on $ K $, there exists an $ M > 0 $ such that $ \vert S_j(z, a) \vert \leq M $ for $ j $ large.
	As
	\[
	L_j(z, a) = \frac{S_j(z, a)}{f_j(z, a)}
	\]
	for all $z \in D_j \setminus \{a\}$, we obtain  
	\[
	|L_j(\zeta, a)| \leq \frac{2M}{m}
	\]
	for all $\zeta \in  K$ and for $j$ large.
	This shows that $L_j(z, a)$ is locally uniformly bounded on $ \h \setminus \{a\} $, and hence a normal family. 
	\medskip
	
	Finally, any limit of $ L_j(z, a) $ must be $ S_{\h}\big/ f_{\h}(z,a) $ on $ \h \setminus \{a\} $ and hence $ L_j(z, a) $ converges to $ L_{\h}(z, a) $ uniformly on compact subsets of $ \h \setminus \{a\} $.  
\end{proof}

\medskip

\begin{rem}
	This generalizes the main result of \cite{SuitaI} to the case of a sequence of domains $ D_j $ that converges to $ \h $ as described above.
\end{rem}
\begin{prop}\label{Pr:UniformConvergenceOfCaraParDerCara}
	Let $\{D_j\}$ be the sequence of domains that converge to the half-space $\h$ in the Hausdorff sense as in the previous proposition. Then the sequence of Carath\'eodory metrics $c_{D_j}$ of the domains $D_j$ converges to the Carath\'eodory metric $c_{\h}$ of the half-space $ \h $ uniformly on every compact subset of $\h$. Moreover, all the partial derivatives of $c_{D_j}$ converge to the corresponding partial derivatives of $c_{\h}$, and the sequence of curvatures $ \kappa(z : D_j) $ converges to $  \kappa(z : \h) = -4 $, which is the curvature of the half-space $ \h $, uniformly on every compact subset of $ \h $.   	
\end{prop}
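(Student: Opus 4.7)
The plan is to derive everything from Proposition~\ref{Prop:ConvergenceAhlforsSzegoGarabeidian}, which already gives $S_j(\zeta,z)\to S_{\h}(\zeta,z)$ uniformly on compact subsets of $\h\times\h$. Combined with the identity $c_D(z) = 2\pi S_D(z,z)$ (used in the proof of the previous proposition), applied both on each $D_j$ and on $\h$, restriction to the diagonal $\zeta=z$ immediately yields the uniform convergence $c_{D_j}\to c_{\h}$ on compact subsets of $\h$, which is the first assertion.

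For convergence of partial derivatives, the key observation is that $S_j(\zeta,z)$ is holomorphic in $\zeta$ and antiholomorphic in $z$, hence a genuine holomorphic function of two complex variables once one treats $\bar z$ as an independent variable. By Weierstrass's theorem in two variables, uniform convergence on compact subsets of $\h\times\h$ upgrades to uniform convergence of every mixed partial derivative $\partial_\zeta^a\partial_{\bar z}^b S_j(\zeta,z)$ to the corresponding derivative of $S_{\h}$. Setting $\zeta = z$ and applying the chain rule, every partial derivative $\partial_z^a\partial_{\bar z}^b c_{D_j}(z)$ of the diagonal restriction $c_{D_j}(z) = 2\pi S_j(z,z)$ is a finite linear combination, with universal numerical coefficients, of evaluations $\partial_\zeta^i\partial_{\bar z}^k S_j(\zeta,z)\big|_{\zeta=z}$. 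Each such expression converges uniformly on the compact set $\{(z,z): z\in K\}$ for any compact $K\subset \h$, so $\partial_z^a\partial_{\bar z}^b c_{D_j}\to\partial_z^a\partial_{\bar z}^b c_{\h}$ uniformly on $K$ for every $a,b\ge 0$.

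For the curvature, expand
\[
\kappa(z:D) = -c_D^{-2}\,\Delta\log c_D = -c_D^{-3}\,\Delta c_D + c_D^{-4}\,|\nabla c_D|^2.
\]
Since $c_{\h}$ is strictly positive on any compact $K\subset \h$, with $c_{\h}\ge m>0$ say, the uniform convergence $c_{D_j}\to c_{\h}$ forces $c_{D_j}\ge m/2$ on $K$ for large $j$, so all denominators stay bounded away from zero. The preceding step gives uniform convergence of $c_{D_j}$ together with its first and second partials, i.e.\ of $\nabla c_{D_j}$ and $\Delta c_{D_j}$, on $K$, so each factor in the expression above converges uniformly, yielding $\kappa(z:D_j)\to\kappa(z:\h)$ uniformly on $K$. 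The value $\kappa(z:\h)=-4$ follows either from the explicit Szegő kernel formula for $\h$ recorded above or from the fact that $\h$ is simply connected, so its Carath\'eodory metric coincides with the hyperbolic metric of constant curvature $-4$. The only non-trivial step is the bookkeeping in passing from two-variable Weierstrass convergence of $S_j(\zeta,z)$ to convergence of the diagonal derivatives $\partial_z^a\partial_{\bar z}^b c_{D_j}$, but this is routine once the dependence on $\zeta$ and $\bar z$ is kept separate; the substantive content has already been absorbed into Proposition~\ref{Prop:ConvergenceAhlforsSzegoGarabeidian}.
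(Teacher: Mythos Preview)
Your proposal is correct and follows essentially the same approach as the paper: both derive the uniform convergence of $c_{D_j}$ from $c_{D_j}(z)=2\pi S_j(z,z)$ and Proposition~\ref{Prop:ConvergenceAhlforsSzegoGarabeidian}, upgrade to convergence of all partial derivatives via the holomorphicity of $S_j(\zeta,z)$ in $(\zeta,\bar z)$ (the paper writes out the Cauchy integral estimate explicitly where you invoke Weierstrass), and then plug into the explicit curvature formula with the observation that the denominators are bounded away from zero on compacta. Your remark about the chain rule passage from off-diagonal to diagonal derivatives is a point the paper leaves implicit.
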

\begin{proof}
	Since 
	\[
	c_{D_j}(z) = 2 \pi S_j(z, z)
	\]
	and $ S_j(z, z) $ converges to $ S_{\h}(z, z) $ uniformly on compact subsets of $ \h $, we conclude that the sequence of Carath\'eodory metrics $c_{D_j}$ of the domains $D_j$ also converges to the Carath\'eodory metric $c_{\h}$ of the half-space $ \h $ uniformly on every compact subset of $\h$.
	\medskip
	
	To show that the derivatives of $ c_{D_j} $ converge to the corresponding derivatives of  $ c_{\h} $, it is enough to show the convergence in a neighborhood of a point in $\h$. 
	\medskip
	
	Let $ D^2 = D^2((z_0, \zeta_0); (r_1, r_2)) $ be a bidisk around the point $ (z_0, \zeta_0) \in \h \times \h$ which is relatively compact in $\h \times \h$. Then $ D^2 $ is relatively compact in $D_j \times D_j$ for all large $j$. Let $C_1 = \{z : |z -z_0| = r_1\}$ and  $C_2 = \{\zeta : |\zeta -\zeta_0| = r_2\}$.  
	\medskip
	
	Since $S_j(z, \zeta)$ is holomorphic in $z$ and antiholomorphic in $\zeta$, the Cauchy integral yields
	\[
	\frac{\partial^{m +n}S_j(z, \zeta)}{\partial z^m\partial \bar{z}^n } = \frac{m!n!}{(2\pi i)^2} \int_{C_1} \int_{C_2}\frac{S_j(\xi_1, \xi_2)}{(\xi - z)^{m + 1}(\overline{\xi - \zeta})^{n + 1}}d\xi_1 d\xi_2 
	\]
	and an application of the maximum modulus principle shows that
	\[
	\left|\frac{\partial^{m +n}S_j(z, \zeta)}{\partial z^m\partial \bar{z}^n }\right| \leq
	\frac{m!n!}{4\pi^2}\sup_{(\xi_1, \xi_2) \in D^2} |S_j(\xi_1, \xi_2)|\frac{1}{r_1^m r_2^n}.  
	\] 
	Now applying the above inequality for the function $S_j - S_{\h}$, we get
	\[
	\left|\frac{\partial^{m +n}}{\partial z^m\partial \bar{z}^n }( S_j - S_{\h} )(z, \zeta)\right| 
	\leq
	\frac{m!n!}{4\pi^2}\sup_{(\xi_1, \xi_2) \in D^2} |     S_j(\xi_1, \xi_2) - S_{\h}(\xi_1, \xi_2) |\frac{1}{r_1^m r_2^n}.
	\]
	Since $S_j(z, z) \to S_{\mathcal{H}}(z, z)$ uniformly on $ D^2 $, all the partial derivatives of $S_j(z, z)$ converge to the corresponding partial derivatives of $S_{\mathcal{H}}(z, z)$ uniformly on every compact subset of $D^2$.
	\medskip
	
	Recall that the curvature of the Carath\'eodory metric $ c_{D_j} $ is given by
	\[
	\kappa(z : c_{D_j}) = -c_{D_j}(z)^{-2}\Delta \log c_{D_j}(z)
 	\]
	which upon simplification gives
	\[
	-\Delta \log c_{D_j} = 4c_{D_j}^{-4}\left( \partial^{0\bar{1}}c_{D_j}\partial^{1\bar{0}}c_{D_j} - c_{D_j}\partial^{1\bar{1}}c_{D_j} \right)		
	\]
	where $ \partial^{i \bar j}c_{D_j} = \partial^{i + j}c_D\big/\partial^iz  \partial^j \bar z $ for $ i, j = 0, 1 $.
	Since all the partial derivatives of $ c_{D_j} $ converge uniformly to the corresponding partial derivatives of $ c_{\h} $,
	\[
	-\Delta \log c_{D_j} = 4c_{D_j}^{-4}\left( \partial^{0\bar{1}}c_{D_j}\partial^{1\bar{0}}c_{D_j} - c_{D_j}\partial^{1\bar{1}}c_{D_j} \right)		
	\]
	converges to 
	\[
	-\Delta \log c_{\h} = 4c_{\h}^{-4}\left( \partial^{0\bar{1}}c_{\h}\partial^{1\bar{0}}c_{\h} - c_{\h}\partial^{1\bar{1}}c_{\h} \right)		
	\]
	as $ j \to \infty $.
	\medskip
	
	Hence the sequence of curvatures 
	\[
	\kappa(z : c_{D_j}) = 4c_{D_j}^{-4}\left( \partial^{0\bar{1}}c_{D_j}\partial^{1\bar{0}}c_{D_j} - c_{D_j}\partial^{1\bar{1}}c_{D_j} \right)
	\]
	converges to $ \kappa(z,c_{\h}) = -4$ as $ j \to \infty $. 
\end{proof}
%

\medskip

\begin{proof}[Proof of Theorem~\ref*{T:HigherCurvature}]
	Note that the higher order curvatures of the Carath\'eodory metrics of the domains $ D_j $ are given by
	\[
	\kappa_n(z : c_{D_j}) =c_{D_j}(z)^{-(n + 1)^2}\big/ J^{D_j}_n(z).
	\]
	By appealing to the convergence $c_{D_j}$ and its partial derivatives to $c_{\h}$ and its corresponding partial derivatives uniformly on every compact  subset of $ \h $, we infer that  $\kappa_n(z, c_{D_j})$  converges to $\kappa_n(z : c_{\h}) = - 4 \left(\prod_{k = 1}^n k!\right)^2$ uniformly on every compact subset of $\h$. The fact that 
	\[
	\kappa_n(z : c_{\h}) = - 4 \left(\prod_{k = 1}^n k!\right)^2
	\]
	is a calculation using the Carath\'eodory metric on $ \h $.
\end{proof}


\section{Proof of Theorem~\ref{T:AumannCaratheodoryConstant}}

\begin{proof} 
	Scale $ D $ near $ p $ as explained earlier along a sequence $ p_j \rightarrow p $. Let $ D_j = T_j(D) $ as before and let $ \Omega_j $ be Aumann-Carath\'eodory constant of $ D_j $ and $ D' $. Then $ \Omega_j(0, w) = \Omega(p_j, w) $ for all $ j $. So, it suffices to study the behaviour of $ \Omega_j(0, w) $ as $ j \rightarrow \infty $.
	\medskip
	
	Let $ z \in D_j $ and $ \Omega_{j}(z, w) $ be Aumann-Carath\'eodory constant at $ z $ and $ w $. We shall show that $ \Omega_{j}(z, w) $ converges to $1$ as $ j \to \infty $.
	Let $ c_{D_j}$ and $ \lambda_{D_j}$ be Carath\'eodory and hyperbolic metric  on $D_j$ respectively, for all $j\geq 1$. Using the following inequality
	\[
	\Omega_{0} \leq \Omega \leq 1,
	\]
	we have 
	\begin{equation}\label{Eq:RatioCaraHyper}
	\frac{ c_{D_j}(z) }{ \lambda_{D_j}(z) }\leq \Omega_{j}(z, w) \leq 1.
	\end{equation}
	By Proposition~\ref{Pr:UniformConvergenceOfCaraParDerCara},
    $c_{D_j}(z)$ converges to $c_{\h}(z)$ uniformly on every compact subset of $\h$ as $ j \to \infty $. Again, using the scaling technique, we also have that the hyperbolic metric $\lambda_{D_j}(z) $ converges to $ \lambda_{\h}(z)$ on  uniformly every compact subsets of $\h$ as $ j \to \infty $. In case of simply connected domains, in particular for the half-space $ \h $, the hyperbolic metric and Carath\'eodory metric coincide, so we have $ c_{\h}(z) =  \lambda_{\h}(z) $.  Consequently, by (\ref{Eq:RatioCaraHyper}), we conclude that $ \Omega_{j}(z, w) $ converges to $1$, as $ j \to \infty $, uniformly on every compact subset of $ \h $. In particular, we have $ \Omega_{j}(0, w) \to 1 $ as  $ j \to \infty $. This completes the proof.  
\end{proof}
\begin{cor}
	Let $D \subset \C $ be a domain and  $ p \in \partial D $ is a $ C^2 $-smooth boundary point. Then the Aumann-Carath\'eodory rigidity constant
	\[
	\Omega_D(z) \rightarrow 1
	\]
	as $ z \rightarrow p $.
\end{cor}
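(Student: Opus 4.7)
The plan is to leverage the squeeze
\[
\frac{c_D(z)}{\lambda_D(z)} \le \Omega_D(z) < 1
\]
recorded in the introduction: since the upper bound is already $1$, the conclusion reduces to showing $c_D(z)/\lambda_D(z) \to 1$ as $z \to p$.

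First, I would fix an arbitrary sequence $p_j \to p$ in $D$ and apply the affine scalings $T_j(z) = (z-p_j)/(-\psi(p_j))$ of Section~2, writing $D_j = T_j(D)$, so that the $D_j$ converge to the half-space $\h$ in the Hausdorff sense. Affine invariance of both the Carathéodory and the hyperbolic density gives
\[
\frac{c_D(p_j)}{\lambda_D(p_j)} = \frac{c_{D_j}(0)}{\lambda_{D_j}(0)}.
\]
Next, I would invoke Proposition~\ref{Pr:UniformConvergenceOfCaraParDerCara} to conclude $c_{D_j}(0) \to c_{\h}(0)$, and apply the Carathéodory kernel convergence theorem, exactly as in Proposition~3.1 and in the proof of Theorem~\ref{T:AumannCaratheodoryConstant}, to obtain $\lambda_{D_j}(0) \to \lambda_{\h}(0)$. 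Since the half-space $\h$ is simply connected, $c_{\h}(0) = \lambda_{\h}(0)$; the ratio therefore tends to $1$, forcing $\Omega_D(p_j) \to 1$. As the sequence $\{p_j\}$ was arbitrary, this yields $\Omega_D(z) \to 1$ as $z \to p$.

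The hard part will be that the corollary drops the boundedness hypothesis on $D$ present in the earlier propositions. I would address this by localization near $p$: choose a bounded neighborhood $U$ of $p$ for which $U \cap D$ is simply connected, use the standard comparison $\lim_{z \to p} c_{U \cap D}(z)/c_D(z) = 1$ (cited in Proposition~3.1 from \cite[\S19.3]{InvariantMetricJarnicki}) together with its hyperbolic analogue to replace $D$ by $U \cap D$ without disturbing the limit in question, and then run the scaling argument above on this bounded piece, at which stage all the earlier propositions apply verbatim.
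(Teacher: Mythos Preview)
Your approach is correct and coincides with the paper's: the corollary is stated without a separate proof, precisely because the squeeze $c_D(z)/\lambda_D(z)\le \Omega_D(z)<1$ together with the scaling argument in the proof of Theorem~\ref{T:AumannCaratheodoryConstant} (Proposition~\ref{Pr:UniformConvergenceOfCaraParDerCara} for $c_{D_j}\to c_{\h}$ and the analogous convergence $\lambda_{D_j}\to\lambda_{\h}$, combined with $c_{\h}=\lambda_{\h}$) already forces the ratio to $1$. Your localization paragraph for the unbounded case goes beyond what the paper does---the paper simply relies on the standing boundedness assumption announced in the introduction---but it is a sensible way to handle the more general wording of the corollary.
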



\section{Proof of Theorem~\ref{T:SugawaMetric}}
\noindent
For a fixed $ p \in D $, let $ \phi_D(\zeta, p) $ be the extremal holomorphic differential for the metric $ q_D(z) \vert dz \vert $. Recall Lemma~2.2 from \cite{SugawaMetric} that relates how $ \phi_D(\zeta, p) $ is transformed by a biholomorphic map. Let $ F : D \longrightarrow D' $ be a biholomorphic map. Then 
\[
\phi_{D'}(F(\zeta), F(p)) \left(F^{\prime}(\zeta)\right)\frac{\overline{F^{\prime
}(p)}}{F^{\prime}(p)} = \phi_D(\zeta, p).
\]
\begin{lem}\label{L:ExtremalFunctionSugawaMetricHalfSpace}
	Let $\h = \{z \in \C : Re(\overline{\omega}z -1) < 0\}$ be a half-space, and let $\omega_0 \in \h$. Then the extremal function of $ q_{\h}(z) \vert dz \vert $ of $\h$ at $\omega_0 \in \h$ is
	\[
	\phi_{\h}(z, \omega_0) = |\omega|^2 \frac{(2 - \omega\overline{\omega_0} - \overline{\omega}\omega_0)^2}{(2 - \omega\overline{\omega_0} - \overline{\omega}z)^4}
	\]
	for all $z \in \h$.
\end{lem}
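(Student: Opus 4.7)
The plan is to obtain the extremal on $\h$ by transporting the (almost trivial) extremal on the unit disc via a well-chosen biholomorphism, using the transformation law of Sugawa's Lemma~2.2 that was recalled just before the lemma. The convenient map is the Ahlfors map $F(\zeta) = f_\h(\zeta,\omega_0)$ of $\h$ with base point $\omega_0$, whose explicit formula
\[
F(\zeta) = |\omega|\,\frac{\zeta - \omega_0}{2 - \omega\bar\omega_0 - \bar\omega\zeta}
\]
is written out in the excerpt; it is a biholomorphism $F:\h\to\mathbb{D}$ with $F(\omega_0) = 0$.

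The first step is to identify the extremal on $\mathbb{D}$ at the origin. Since $q_{\mathbb{D}}(z) = 1/(1-|z|^2)$ (as recorded in the introduction from \cite{SugawaMetric}), we have $q_{\mathbb{D}}(0) = 1$. The constant differential $\phi \equiv 1\cdot d\zeta^2$ satisfies $\|\phi\|_1 = \int_{\mathbb{D}} 1\,dA = \pi$ and $|\phi(0)|^{1/2} = 1 = q_{\mathbb{D}}(0)$, so by the uniqueness part of Theorem~2.1 of \cite{SugawaMetric} the extremal at the origin is $\phi_{\mathbb{D}}(\zeta,0) \equiv 1$.

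The second step is to apply the transformation law with $D = \h$, $D' = \mathbb{D}$, $p = \omega_0$, which gives
\[
\phi_\h(\zeta,\omega_0) \;=\; \phi_{\mathbb{D}}(F(\zeta),0)\,(F'(\zeta))^2\,\frac{\overline{F'(\omega_0)}}{F'(\omega_0)} \;=\; (F'(\zeta))^2\,\frac{\overline{F'(\omega_0)}}{F'(\omega_0)}.
\]
A direct quotient-rule computation produces
\[
F'(\zeta) \;=\; |\omega|\,\frac{2 - \omega\bar\omega_0 - \bar\omega\omega_0}{(2 - \omega\bar\omega_0 - \bar\omega\zeta)^2},
\]
and, at $\zeta = \omega_0$, $F'(\omega_0) = |\omega|\big/(2 - \omega\bar\omega_0 - \bar\omega\omega_0)$. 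The denominator $2 - \omega\bar\omega_0 - \bar\omega\omega_0 = 2(1-\Re(\bar\omega\omega_0))$ is real and strictly positive, exactly because $\omega_0 \in \h$ means $\Re(\bar\omega\omega_0) < 1$. Hence $F'(\omega_0) > 0$, the phase factor equals $1$, and squaring $F'(\zeta)$ yields the claimed identity.

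The one step that requires a bit of care is handling the phase in the transformation law (the ratio $\overline{F'(\omega_0)}/F'(\omega_0)$); all the rest is mechanical. The point is that the Ahlfors map has been normalized so that its derivative at $\omega_0$ is real and positive, so this phase is automatically $1$ and no further adjustment of $F$ by a rotation of $\mathbb{D}$ is required. The remaining computation is just the squaring of a rational function of $\zeta$.
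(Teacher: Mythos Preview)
Your proof is correct and follows essentially the same route as the paper: identify $\phi_{\mathbb D}(\zeta,0)\equiv 1$, transport it via the Riemann/Ahlfors map $F:\h\to\mathbb D$ sending $\omega_0\mapsto 0$, and compute $F'$ explicitly. You are in fact slightly more careful than the paper in justifying why the phase factor $\overline{F'(\omega_0)}/F'(\omega_0)$ equals $1$ (namely because $2-\omega\bar\omega_0-\bar\omega\omega_0=2(1-\Re(\bar\omega\omega_0))>0$), and your use of $(F'(\zeta))^2$ in the transformation law is the correct reading---the displayed formula just before the lemma is missing a square, as the final answer confirms.
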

\begin{proof}
	The extremal function for $q_{\mathbb{D}}$ at the point $z = 0$ is 
	\begin{equation}\label{Eq: HurAtZeroDisk}
	\phi_{\mathbb{D}}(\zeta, 0) = 1.
	\end{equation}
	for all $\zeta \in \mathbb{D}$. Also
	\[
	f(z) = \frac{|\omega|(z - \omega_0)}{2 - \omega\overline{\omega_0} - \overline{\omega}z}
	\] 
	is the Riemann map between $ \h $ and the unit disk $ \D $. 
	By computing the derivative of the map $ f $ , we get
	\begin{equation}
	f^{\prime}(z) = |\omega| \frac{(2 - \omega\overline{\omega_0} - \overline{\omega}\omega_0)}{(2 - \omega\overline{\omega_0} - \overline{\omega}z)^2} 
	\end{equation}
	for all $ z \in \h $.
	By substituting the value of $f^{\prime}(z)$, $f^{\prime}(0)$ and $\overline{f^{\prime}(0)}$ in the transformation formula, we get
	\begin{equation}\label{Eq:TansForwithPhiD}
	\phi_{\h}(z, \omega) = |\omega|^2 \frac{(2 - \omega\overline{\omega_0} - \overline{\omega}\omega_0)^2}{(2 - \omega\overline{\omega_0} - \overline{\omega}z_0)^4}\phi_{\D}(f(z), f(\omega_0)).
	\end{equation}
	Note that $f(\omega_0) = 0$, therefore by (\ref{Eq: HurAtZeroDisk}), we have $\phi_{\D}(f(z), f(\omega_0)) = 1$. So from (\ref{Eq:TansForwithPhiD})  
	\begin{equation}
	\phi_{\h}(z, \omega_0) = |\omega|^2 \frac{(2 - \omega\overline{\omega_0} - \overline{\omega}\omega_0)^2}{(2 - \omega\overline{\omega_0} - \overline{\omega}z)^4}
	\end{equation}
	for all $z \in \h$.
\end{proof}

\begin{lem}\label{L:ConvergenceOfIntegral}
	Let the sequence of domains $ D_j $ converges to $ \h $ as before. Let $ z_j $ be a sequence in $ \h $ that converges to $ z_0 \in \h $. Let $ \phi_{\h,j} $ be the extremal function of $ q_{\h}(z)\vert dz \vert $ at $ z_j $ for all $ j $ and $ \phi_{\h} $ be the extremal function for $ q_{\h}(z)\vert dz \vert $ at $ z_0 $. Then  
	\[ 
	\int_{D_j}|\phi_j| \to \int_{\h}\vert \phi_{\h}\vert  = \pi 
	\]
	as $j \to \infty$.
\end{lem}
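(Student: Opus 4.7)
The plan is to exploit the explicit rational form of $\phi_\h(\cdot,\omega_0)$ from Lemma~\ref{L:ExtremalFunctionSugawaMetricHalfSpace} and to split $\int_{D_j}|\phi_{\h,j}|$ into a bulk piece on a large fixed ball $B(0,R)$ and a tail piece at infinity. After a rotation that sends $\omega$ to $1$, so that $\h = \{\operatorname{Re}(z)<1\}$, the lemma gives
\[
\phi_{\h,j}(z) \;=\; \frac{(2-2\operatorname{Re} z_j)^2}{(2-\overline{z_j}-z)^4},
\]
a meromorphic function on $\C$ with a single pole of order four at $z_j^{\ast}=2-\overline{z_j}$. Because $z_j\to z_0\in\h$, each $z_j^{\ast}$ lies in $\{\operatorname{Re}(z)>1\}$ at distance $1-\operatorname{Re} z_j$ from $\partial\h$, which is bounded below by some $\delta_0>0$ for all $j$ large. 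Hence $\{z_j^{\ast}\}$ sits in a fixed compact set, and $\phi_{\h,j}\to\phi_\h$ locally uniformly on $\C\setminus\{z_0^{\ast}\}$.

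For the bulk estimate, I would choose $R$ with $z_j^{\ast}\in B(0,R/2)$ for all large $j$. Then $\phi_{\h,j}$ is uniformly bounded on $\overline{B(0,R)}$, while the Hausdorff convergence $\overline{D_j}\to\overline{\h}$ recalled in Section~2 implies $\chi_{D_j}\to\chi_{\h}$ almost everywhere on $B(0,R)$; the failure locus $\partial\h\cap B(0,R)$ has two-dimensional Lebesgue measure zero. Dominated convergence then yields
\[
\int_{D_j\cap B(0,R)}|\phi_{\h,j}|\,dA \;\longrightarrow\; \int_{\h\cap B(0,R)}|\phi_\h|\,dA
\]
as $j\to\infty$.

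For the tail, enlarge $R$ if necessary so that $|z-z_j^{\ast}|\ge|z|/2$ whenever $|z|\ge R$, uniformly in $j$. The explicit formula yields $|\phi_{\h,j}(z)|\le C/|z|^4$ on $\{|z|\ge R\}$ with $C$ independent of $j$, and a polar-coordinate computation provides
\[
\int_{D_j\setminus B(0,R)}|\phi_{\h,j}|\,dA \;\le\; \int_{|z|\ge R}\frac{C}{|z|^4}\,dA \;=\; \frac{\pi C}{R^2},
\]
uniformly in $j$, with the identical estimate holding for $\int_{\h\setminus B(0,R)}|\phi_\h|\,dA$. A standard $\varepsilon/3$ argument in $R$, combined with the bulk convergence, then produces $\int_{D_j}|\phi_{\h,j}|\to\int_\h|\phi_\h|$, which equals $\pi$ by the extremality normalization $\|\phi_\h\|_1=\pi$.

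The main obstacle is that the convergence $D_j\to\h$ is only \emph{local} in the Hausdorff sense: although each $D_j$ is bounded, it may well extend far beyond the region where it is Hausdorff-close to $\h$, so one cannot bound $|\phi_{\h,j}|$ on $D_j$ using properties of $\h$ alone. The explicit rational form of $\phi_\h(\cdot,\omega_0)$ is precisely what rescues the argument: the $|z|^{-4}$ decay is integrable at infinity with a bound that is uniform in $j$ as soon as the pole $z_j^{\ast}$ is confined to a compact set, and this uniform confinement is guaranteed by the hypothesis $z_j\to z_0\in\h$.
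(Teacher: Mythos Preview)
Your strategy is sound and close in spirit to the paper's, but the bulk step contains a false assertion. You write that once $R$ is chosen with $z_j^{\ast}\in B(0,R/2)$, the function $\phi_{\h,j}$ is uniformly bounded on $\overline{B(0,R)}$. This cannot hold: the pole $z_j^{\ast}$ lies \emph{inside} $B(0,R)$, so $|\phi_{\h,j}|$ blows up there. What you actually need is a uniform bound on the integrand $\chi_{D_j}|\phi_{\h,j}|$, and for this it is essential that $D_j$ stays uniformly away from the poles. Since $z_0^{\ast}\notin\overline{\h}$ and $D_j\to\h$ in the local Hausdorff sense described in Section~2, there is a ball $B(z_0^{\ast},\eta)$ disjoint from every $D_j$ for $j$ large; together with $z_j^{\ast}\to z_0^{\ast}$ this gives $\operatorname{dist}(z_j^{\ast},D_j)\ge\eta/2$ eventually, and hence $\chi_{D_j}|\phi_{\h,j}|$ is bounded by a fixed constant on $B(0,R)$. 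With this repair your dominated-convergence argument on the bulk goes through, and the tail estimate and $\varepsilon/3$ conclusion are fine as written.

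For comparison, the paper dispenses with the bulk/tail split altogether: it works with $\chi_{D_j}|\phi_{\h,j}|$ on all of $\C$ and exhibits a single dominating function equal to a constant multiple of $|z-Z_0|^{-4}$ outside a fixed ball $B(Z_0,R)$ around the limiting pole and equal to $0$ on that ball, invoking precisely the disjointness $\overline{B(Z_0,R)}\cap D_j=\emptyset$ to justify the latter. One global application of dominated convergence then finishes. Your decomposition centred at the origin works equally well once the pole issue is handled, but centring the argument at the pole, as the paper does, turns that crucial disjointness into the organizing principle rather than a patch.
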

\begin{proof}
	By Lemma~\ref{L:ExtremalFunctionSugawaMetricHalfSpace}, we see that the extremal functions of $ q_{\h}(z) \vert dz \vert $ of the half-space $\h$ at the points $z_j$ and $z_0$ are given by
	\begin{equation}
	\phi_{\h, j}(z, z_j) = |\omega|^2 \frac{(2 - \omega\overline{z_j} - \overline{\omega}z_j)^2}{(2 - \omega\overline{z_j} - \overline{\omega}z)^4}
	\end{equation}
	and 
	\begin{equation}
	\phi_{\h}(z, z_0) = |\omega|^2 \frac{(2 - \omega\overline{z_0} - \overline{\omega}z_0)^2}{(2 - \omega\overline{z_0} - \overline{\omega}z)^4}
	\end{equation}
	respectively, for all $z \in \h$ and for all $j \geq 1$. By substituting $Z_j = \frac{2 - \omega\overline{z_j}}{\overline{\omega}}$ and $Z_0 = \frac{2 - \omega\overline{z_0}}{\overline{\omega}}$ in the above equations, we can rewrite the above equations as 
	\[
	\phi_{\h, j}(z,z_j) = \frac{|\omega|^2}{\overline{\omega}^4}\frac{(2 - \omega\overline{z_j} - \overline{\omega}z_j )^2}{(Z_j - z)^4}
	\]
	and 
	\[
	\phih(z,z_0) = \frac{|\omega|^2}{\overline{\omega}^4}\frac{(2 - \omega\overline{z_0} - \overline{\omega}z_0 )^2}{(Z_0 - z)^4}.
	\]
	respectively, for all $z \in \h$ and for all $j \geq 1$. 
	
%
%
%
    \medskip
    Note that $ Z_0 \notin \h $ and hence $ Z_j \notin \h $ for $ j $ large.	
	Define 
	\[
	\phi_j(z) =  \xidj(z)|\phi_{\h,j}(z, z_j)|  
	\]
	and 
	\[
	\phi(z) =  \xih(z) |\phih(z, z_0)|
	\]
	Here, $\chi_{A}$, for $A \subset \C$, its characteristic function.
	Note that $\phi_j$ and $ \phi $ are measurable functions on $ \C $ and $ \phi_j \to \phi $ point-wise almost everywhere on $\C$.
	\medskip
	
	Next, we shall show that there exists a measurable function $g$ on $\C$ satisfying
	\[
	\vert \phi_j \vert \leq g
	\]
	for all $j\geq 1$ and 
	\[
	\int_{\C}|g| < \infty.
	\]
	\medskip
	
	First, we note that 
	\[
	|Z_j - z| \geq \frac{R}{2}
	\]
	for all $j \geq 1$. Again, by the triangle inequality, we have  
	\[
	\left|\frac{|Z_0 - z|}{|Z_j - z|} - 1 \right| \leq \frac{|Z_0 - Z_j|}{|Z_j - z|} \leq 1
	\]
	for all $j \geq 1$ and for all $z \in \C \setminus B(Z_0, R)$. Therefore
	\[
	\frac{1}{|Z_j - z|} \leq \frac{2}{|Z_0 - z|}
	\]
	for all $z \in \C \setminus B(Z_0, R)$ and for all $j \geq 1$ and this implies that
	\[
	\frac{1}{|\omega^2|} \frac{|2 - \omega\overline{z_j} - \overline{\omega}z_j |^2}{|Z_j - z|^4} \leq \frac{16}{|\omega^2|}\frac{|2 - \omega\overline{z_j} - \overline{\omega}z_j |^2}{|Z_0 - z|^4}
	\] 
	for all $z \in \C \setminus B(Z_0, R)$ and for all $j \geq 1$.
	\medskip
	Note that there exists $ K > 0 $ such that  
	\[
	|2 - \omega\overline{z_j} - \overline{\omega}z_j |^2 \leq K
	\] 
	for all $j \geq 1$ since $ \{z_j \} $ converges. Hence
	\[
	|\phi_j(z)| \leq \frac{16 K}{|\omega^2|}\frac{1}{|Z_0 - z|^4}
	\]
	for all $z \in \C \setminus B(Z_0, R)$ and for all $j \geq 1$. If we set
	\[
	g(z) = \begin{cases} \frac{16 K}{|\omega^2|}\frac{1}{|Z_0 - z|^4}, &\mbox{if} \,\, z \in \C \setminus B(Z_0, R)\\
	0  & \mbox{if}\,\, z \in \cdzor
	\end{cases}
	\]
	we get that 
	\[
	|\phi_j(z)| \leq g(z)
	\]
	for all $z \in \C$ and for all $j \geq 1$. 
	\medskip
	
	Now note that 
	\begin{equation*}
	\begin{split}
	\int_{\C}g       &= \int_{\C \setminus B(Z_0, R)} \frac{16K}{|\omega|^2}\frac{1}{|Z_0 - z|^4}
	=  \frac{16K}{|\overline{\omega}^2|}\int_{\Cmincd} \left|\frac{1}{(Z_0 - \zeta)^4}\right| \\
	&=  \frac{16K}{|\overline{\omega}^2|} \int_{R}^{\infty} \int_0^{2\pi} \frac{1}{r^3}  
	=  \frac{16K}{|\overline{\omega}^2|} \frac{1}{R^2} < \infty.
	\end{split}
	\end{equation*}
	The dominated convergence theorem shows that 
	\[
	\int_{\C}\phi_j
	\to
	\int_{\C}\phi_{\h} 
	\]
	as $ j \to \infty $. 
	However, by construction
	\[
	\int_{\C}\phi_j  = \int_{D_j}\phi_{\h, j}(z,z_j)
	\]
	and
	\[
	\int_{\C}\phi  = \int_{\h}\phi_{\h}(z, z_0)
	\]
	and by definition. Hence
	\[
	\int_{\h}\phi_{\h}(z, z_0) = \pi
	\]
	 and this completes the proof.
\end{proof}
\medskip

\begin{prop}\label{Pr:ConvergenceOfTheSugawaMetricUnderScaling}
	Let $\{D_j\}$ be the sequence of domains that converges to the half-space $\h$ as in Proposition~\ref{Prop:ConvergenceAhlforsSzegoGarabeidian}. Then $q_{D_j}$ converges to $q_{\h}$ uniformly on compact subsets of $ \h $.  
\end{prop}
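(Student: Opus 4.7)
The plan is to establish pointwise convergence $q_{D_j}(z_0) \to q_{\h}(z_0)$ at every $z_0 \in \h$ via matching $\liminf$ and $\limsup$ bounds, and then promote this to locally uniform convergence by a standard compactness argument. Throughout, I shall use the existence and uniqueness of the extremal quadratic differential guaranteed by Theorem 2.1 of \cite{SugawaMetric}, which applies since $\h$ and each $D_j$ is non-exceptional.

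For the lower bound, fix $z_0 \in \h$ and let $\phi_{\h}(\cdot, z_0)$ be the $\h$-extremal at $z_0$, so $\Vert \phi_{\h}\Vert_{1,\h} = \pi$ and $\vert \phi_{\h}(z_0,z_0)\vert = q_{\h}(z_0)^2$. The explicit formula in Lemma~\ref{L:ExtremalFunctionSugawaMetricHalfSpace} shows that $\phi_{\h}(\cdot, z_0)$ extends holomorphically to $\C \setminus \{Z_0\}$ with $Z_0 \notin \overline{\h}$; since $D_j \to \h$ in the Hausdorff sense, $Z_0 \notin \overline{D_j}$ for $j$ large, and the $\vert z\vert^{-4}$ decay established inside the proof of Lemma~\ref{L:ConvergenceOfIntegral} shows $\phi_{\h}(\cdot,z_0) \in A(D_j)$. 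Lemma~\ref{L:ConvergenceOfIntegral} then gives $\int_{D_j}\vert \phi_{\h}\vert \to \pi$, so setting $c_j = \pi/\int_{D_j}\vert\phi_{\h}\vert$ the rescaled $c_j \phi_{\h}$ is admissible for $q_{D_j}(z_0)$ and yields
\[
q_{D_j}(z_0)^2 \;\ge\; c_j \, \vert \phi_{\h}(z_0,z_0)\vert \;=\; c_j \, q_{\h}(z_0)^2 \;\longrightarrow\; q_{\h}(z_0)^2.
\]

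For the upper bound, let $\phi_j \in A(D_j)$ be the extremal for $q_{D_j}$ at $z_0$, so $\Vert \phi_j\Vert_{1,D_j} = \pi$ and $\vert \phi_j(z_0)\vert = q_{D_j}(z_0)^2$. Subharmonicity of $\vert \phi_j\vert$ and the sub-mean-value inequality give $\vert \phi_j(z)\vert \le (\pi r^2)^{-1}\Vert \phi_j\Vert_1 = r^{-2}$ whenever $\overline{B(z,r)} \subset D_j$, so $\{\phi_j\}$ is locally uniformly bounded on $\h$ and hence a normal family. Pass to a subsequence (still denoted $\phi_j$) converging locally uniformly on $\h$ to a holomorphic $\phi^\ast$. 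Exhausting $\h$ by compact sets $K_n$, each eventually contained in $D_j$, uniform convergence on $K_n$ combined with Fatou's lemma gives $\int_{K_n}\vert \phi^\ast\vert \le \pi$ for every $n$, hence $\Vert \phi^\ast\Vert_{1,\h} \le \pi$. Thus $\phi^\ast$ is admissible for $q_{\h}(z_0)$, so $\vert \phi^\ast(z_0)\vert^{1/2} \le q_{\h}(z_0)$, and locally uniform convergence at $z_0$ yields $\limsup_j q_{D_j}(z_0) \le q_{\h}(z_0)$.

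To upgrade pointwise convergence to local uniform convergence on a compact $K \subset \h$, I argue by contradiction: if uniformity failed, there would be $\varepsilon > 0$, a subsequence, and points $z_j \in K$ with $z_j \to z^\ast \in K$ such that $\vert q_{D_j}(z_j) - q_{\h}(z_j)\vert \ge \varepsilon$. Both the lower and upper bound arguments above accommodate a moving base point -- Lemma~\ref{L:ConvergenceOfIntegral} is already stated for sequences $z_j \to z_0$, and the normal-family/Fatou step only uses local bounds that are uniform over $K$ -- so $q_{D_j}(z_j) \to q_{\h}(z^\ast)$, contradicting continuity of $q_{\h}$. The step I expect to be the main obstacle is the $\limsup$ argument: the extremals $\phi_j$ live on the varying domains $D_j$ rather than on $\h$, so some care is required in the exhaustion to conclude, via Fatou, that the holomorphic limit $\phi^\ast$ has total mass at most $\pi$ on all of $\h$ and is therefore genuinely admissible in $A(\h)$.
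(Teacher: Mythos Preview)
Your proposal is correct and follows essentially the same route as the paper: the paper also argues by contradiction with moving base points $z_{k_j}\to z_0$, obtains the $\limsup$ bound by extracting a normal-family limit of the $D_j$-extremals and checking $\int_\h|\phi|\le\pi$ via compact exhaustion, and obtains the $\liminf$ bound by rescaling the explicit $\h$-extremal at $z_j$ using Lemma~\ref{L:ConvergenceOfIntegral}. The only cosmetic difference is that you first state the pointwise version and then remark that both bounds accommodate moving base points, whereas the paper runs the whole argument with moving points from the outset.
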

\begin{proof}
	If possible, assume that $q_{D_j}$ does not converge to $q_{\h}$ uniformly on every compact subset of $\h$. 
	Then there exists a compact subset $K$ of $\h$ -- without loss of generality, we may assume that $K \subset D_j$ for all $j \geq 1$ --  $\epsilon_{0} >0 $, a sequence of integers $\{ k_j \}$ and points $\{ z_{k_j}\}\subset K$ such that
	\[
	|q_{D_{k_j}}(z_{k_j})- q_{\mathcal{H}}(z_{k_j})|>\epsilon_0.
	\]      
	Since $K$ is compact, we assume that $z_{k_j}$ converges to a point $z_0 \in K$.
	Using the continuity of $ q_{\h}(z) \vert dz \vert $, we have 
	\[
	|q_{\mathcal{H}}(z_{k_j})- q_{\mathcal{H}}(z_0)|< \epsilon_0\big/2
	\] 
	for $j$ large, which implies 
	\[
	|q_{D_{k_j}}(z_{k_j})- q_{\mathcal{H}}(z_0)|> \epsilon_0\big/2
	\]
	by the triangle inequality.
	There exist extremal functions $\phi_{k_j} \in A(D_{k_j})$ with $ \Vert \phi_{k_j} \Vert \leq \pi $ such that 
	\[
	q_{D_{k_j}}^2 = \phi_{k_j}(z_{k_j}).
	\] 
	We claim that the collection $\{\phi_{k_j}\}$ is a normal family. To show this, it is enough to show $\{\phi_{k_j}\}$ is locally uniformly bounded.
	\medskip
	
	Let $\zeta_0 \in \h$ and $R>0$ such that $\overline{B}(\zeta_0, 2R) \subset \h$. Then $\overline{B}(\zeta_0, 2R) \subset D_j$ for all $j$ large. By the mean value theorem, we have      
	\begin{equation*}
	\begin{split}
	|\phi_{k_j}(\zeta)| = \frac{1}{2\pi} \left|\int_{0}^{2\pi}\phi_{k_j}(\zeta + re^{\theta})d\theta\right| 
	\leq \frac{1}{2\pi} \int_{0}^{2\pi}|\phi_{k_j}(\zeta + re^{\theta})|d\theta
	\end{split}
	\end{equation*}
	for $ 0 < r < R $.
	This implies
	\begin{equation*}
	\begin{split}
	\int_0^R|\phi_{k_j}(\zeta)|rdr &\leq \frac{1}{2\pi} \int_0^R\int_{0}^{2\pi}|\phi_{k_j}(\zeta + re^{\theta})|r d\theta dr 
	\leq \frac{1}{ 2\pi } \int_0^R\int_{0}^{2\pi}|\phi_{k_j}(\zeta + re^{\theta})|r d\theta dr,
	\end{split}
	\end{equation*}
	that is, 
	\begin{equation*}
	\begin{split}
	|\phi_{k_j}(\zeta)|
	&= \frac{1}{\pi R^2} \int\int_{B(\zeta, R)}|\phi_{k_j}(z)| 
	\leq \frac{1}{\pi R^2} \int\int_{D_j}|\phi_{k_j}(z)|
	= \frac{\pi}{\pi R^2} = \frac{1}{R^2}
	\end{split}
	\end{equation*} 
	for all $\zeta \in B(\zeta_0, R)$. This proves that the family $\{\phi_{k_j}\}$ is locally uniformly bounded on $B(\zeta_0, R)$ and hence a normal family. 
	\medskip
	
	Next, we shall show that
	\[
	\limsup_j q_{D_j}(z_j) \leq q_{\mathcal{H}}(z_0).
	\] 
	First, we note that the finiteness of the integral $\int_{D_j}|\phi_{k_j}| = \pi$ implies that no limiting function of the sequence $\{\phi_{k_j}\}$ diverges to infinity uniformly on any compact subset of $\h$.
	\medskip
	
	Now, by definition, we see that there exists a subsequence $\{\phi_{l_{k_j}}\}$ of the sequence $\{\phi_{k_j}\}$ such that $|\phi_{l_{k_j}}(z_0)|$ converges to $\limsup_{j}|\phi_{k_j}(z_0)|$ as $ j \to \infty$. Let $\phi$ be a limiting function of $\{\phi_{l_{k_j}}\}$. For simplicity, we may assume that $\{\phi_{l_{k_j}}\}$ converges to $\phi$ uniformly on every compact subset of $\h$. We claim that $\phi \in A(\h)$ and $ \Vert \phi \Vert \leq \pi $. That is, we need to show 
	\[
	\int_{\h}|\phi| \leq \pi.
	\]
	To show this we take an arbitrary compact subset $K$ of $\h$. Then $K \subset D_j$ and, by the triangle inequality,
	\[
	\int_{K}|\phi| \leq \int_{K} |\phi - \phi_{l_{k_j}}| + \int_{K}|\phi_{l_{k_j}}|
	\]
	for $j $ large. 
	This implies,
	\[
	\int_{K}|\phi| \leq \int_{K} |\phi - \phi_{l_{k_j}}| + \int_{D_j}|\phi_{l_{k_j}}|
	\]
	for $j$ large.
    Since $\int_{D_j}|\phi_{l_{k_j}}| = \pi$ and $\{\phi_{l_{k_j}}\}$ converges to $\phi$ uniformly on $K$, $\int_{K} |\phi - \phi_{l_{k_j}}|$ converges to $0$ as $ j \to \infty$. Hence
	\[
	\int_{\h}|\phi| \leq \pi.
	\]
	This shows that $\phi(z)  $ is a candidate in the family that defines $ q_{\h}(z) $.    
	\medskip
	
	By definition of $ q_{\h}(z) \vert dz \vert $, we have 
	\[
	\phi(z_0) \leq q_{\mathcal{H}}^2(z_0)
	\]  
	which implies 
	\[
	\limsup_j\phi_{k_j}(z_j) \leq q_{\mathcal{H}}^2(z_0).
	\]
	Now, by substituting $q_{D_j}^2(z_j)$ in place of $\phi_{k_j}(z_j)$ above, we obtain  
	\[
	\limsup_j q_{D_j}^2(z_j) \leq q_{\mathcal{H}}^2(z_0)
	\]
	or
	\begin{equation}\label{Eqn: Eqsup}
	\limsup_j q_{D_j}(z_j) \leq q_{\mathcal{H}}(z_0).
	\end{equation}
	\medskip
	
	Next, we show
	\[
	q_{\mathcal{H}}(z_0) \leq \liminf_j q_{D_j}(z_0),
	\]
	and this will lead to a contradiction to our assumption.
	\medskip
	
	As we have seen in  Lemma~\ref{L:ExtremalFunctionSugawaMetricHalfSpace}, the extremal functions of $ q_{\h}(z) \vert dz \vert $ at the points $z_j$ and $z_0$ of the half-space $\h$ are given by rational functions with poles at $Z_j = (2 - \omega\overline{z_j})\big/\overline{\omega}$ and $Z_0 = (2 - \omega\overline{z_0})\big/\overline{\omega}$ respectively. The Hausdorff convergence of $ D_j $ to $ \h $ as $ j \to \infty $ implies that there exists $R>0$ such that 
	$Z_j \in B(Z_0, R\big/2)$ and $\overline{B}(Z_0, R) \subset \C \setminus \overline{D_j}$  for $j$ large. This ensures
	\[
	\psi_j(z) = \pi \phi_j(z)\big / M_j
	\]
	is a well-defined holomorphic function on $ D_j $,
	where $ \phi_j $ is an extremal function of $ q_{\h}(z) \vert dz \vert $ at $ z_j $ and
	\[
	M_j = \int_{D_j} |\phi_j|.
	\]
	By Lemma~\ref{L:ConvergenceOfIntegral}, $ M_j < \infty $ for $j$ large. From the definition of the function $\psi_j$ it follows that $\int_{D_j} |\psi_{\mathcal{H}}| = \pi$ for all $j \geq 1$. Therefore, 
	\[
	\psi_j \in A_0(D_j) = \left\{ |\phi(z)|^2 : \phi \in A(D_j) \, \text{with} \, ||\phi||_1 \leq \pi \right \}
	\]
	and hence 
	\[
	\pi \psi_{\h}(z_j)\big/ M_j = \psi_j(z_j) \leq q_{D_j}^2(z_j).
	\]
    \medskip
                  
	Since $M_j$ converges to $\pi$ as $ j \to \infty$, Lemma~\ref{L:ConvergenceOfIntegral} and by the formulae of the extremal functions of $ \h $, it is seen that $\phi_j(z_j)$ converges to $\phi(z_0)$, the extremal function of the half-space at the point $ z_0 $, as $j \to \infty$. Therefore, taking liminf both sides,
	\[
	q_{\mathcal{H}}^2(z_0) \leq \liminf_j q_{D_j}^2(z_j).
	\]
	That is,
	\begin{equation}\label{Eqn: Eqinf}
	q_{\mathcal{H}}(z_0) \leq \liminf_j q_{D_j}(z_j).
	\end{equation}
	
	By (\ref{Eqn: Eqsup}) and (\ref{Eqn: Eqinf}), we conclude that 
	\[
	\lim_{j \to \infty} q_{D_j}(z_j) = q_{\mathcal{H}}(z_0)
	\]
	which contradicts the assumption that
	\[
	|q_{D_{k_j}}(z_{k_j})- q_{\mathcal{H}}(z_0)|> \epsilon_0\big /2.
	\]
\end{proof}
\medskip

\begin{cor}\label{C:AsymptoticqD}
	Let $ D \subset \C$ be a bounded domain. Suppose $ p \in \partial D $ is a $ C^2 $-smooth boundary point. Then 
	\[
	\lim_{z \to p}q_D(z)(-\psi(z)) = q_{\h}(0) = \vert \omega \vert \big/2.
	\]
\end{cor}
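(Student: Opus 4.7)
The plan is to deduce the corollary directly from Proposition~\ref{Pr:ConvergenceOfTheSugawaMetricUnderScaling} combined with the explicit formula for the extremal differential on $\h$ from Lemma~\ref{L:ExtremalFunctionSugawaMetricHalfSpace}. I would fix an arbitrary sequence $p_j \to p$ in $D$ and perform the scaling from Section~2 along this sequence: set $T_j(z) = (z - p_j)/(-\psi(p_j))$ and $D_j = T_j(D)$, so that $D_j$ converges to $\h$ in the Hausdorff sense.

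The main step is to invoke the conformal invariance of the metric $q$ under the affine biholomorphism $T_j^{-1} : D_j \to D$. Since $T_j^{-1}(z) = -\psi(p_j)\,z + p_j$, evaluating at $z = 0$ gives
\[
q_{D_j}(0) = q_D(T_j^{-1}(0))\,|(T_j^{-1})'(0)| = q_D(p_j)\,(-\psi(p_j)).
\]
Because $0 \in \h$ (one has $-1 + 2\,\Re(\bar\omega \cdot 0) = -1 < 0$), Proposition~\ref{Pr:ConvergenceOfTheSugawaMetricUnderScaling} yields $q_{D_j}(0) \to q_{\h}(0)$. Since $\{p_j\}$ was arbitrary, this produces
\[
\lim_{z \to p} q_D(z)\,(-\psi(z)) = q_{\h}(0).
\]

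Next I would evaluate $q_{\h}(0)$ explicitly. Applying Lemma~\ref{L:ExtremalFunctionSugawaMetricHalfSpace} with $\omega_0 = 0$ gives
\[
\phi_{\h}(z, 0) = |\omega|^2\,\frac{4}{(2 - \bar\omega z)^4},
\]
from which
\[
q_{\h}(0) = |\phi_{\h}(0, 0)|^{1/2} = \left(\frac{4|\omega|^2}{16}\right)^{1/2} = \frac{|\omega|}{2}.
\]

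Since the real work --- the scaling convergence $q_{D_j} \to q_{\h}$ --- has already been carried out in Proposition~\ref{Pr:ConvergenceOfTheSugawaMetricUnderScaling}, there is no substantive obstacle; the only mild verification is the conformal invariance identity $q_{D_j}(0) = q_D(p_j)(-\psi(p_j))$, which follows by specializing the transformation formula for $\phi$ stated just before Lemma~\ref{L:ExtremalFunctionSugawaMetricHalfSpace} to $\zeta = p$ and taking absolute values.
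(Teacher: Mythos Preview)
Your proposal is correct and follows essentially the same route as the paper: scale along an arbitrary sequence $p_j\to p$, use conformal invariance of $q$ under $T_j$ to rewrite $q_D(p_j)(-\psi(p_j))$ as $q_{D_j}(0)$, and then invoke Proposition~\ref{Pr:ConvergenceOfTheSugawaMetricUnderScaling} to pass to $q_{\h}(0)$. Your explicit evaluation of $q_{\h}(0)=|\omega|/2$ via Lemma~\ref{L:ExtremalFunctionSugawaMetricHalfSpace} supplies a detail the paper leaves implicit, but otherwise the arguments coincide.
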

\begin{proof}
Let $ D_j = T_j(D) $ where $ T_j(z) = (z - p_j)\big/(-\psi(p_j)) $. By Proposition~\ref{Pr:ConvergenceOfTheSugawaMetricUnderScaling}  $ q_{D_j} \to q_{\h} $ on compact subsets of $ \h $. In particular, $q_{D_j}(0) \to q_{\h}(0) $ as $ j \to \infty $. But $ q_{D_j}(0) = q_{D}(p_j) (-\psi(p_j)) $ and hence $ q_{D_j}(0) = q_{D}(p_j) (-\psi(p_j)) \to q_{\h}(0) $. Since $ p_j $ is an arbitrary sequence converging to $ p $, hence it follows that
	\[
	\lim_{z \to p}q_D(z)(-\psi(z)) = q_{\h}(0) = \vert \omega \vert \big / 2.
	\]
	
\end{proof}

\begin{proof}[Proof of Theorem~\ref{T:SugawaMetric}]
As a consequence of the Corollary~\ref{C:AsymptoticqD}, we have
\[
q_D(z) \approx 1\big /{\rm dist}(z, \partial D).
\]
\end{proof}


\section{Proof of Theorem~\ref{T:TheHurwitzMetric}}
\noindent
The continuity of $ \eta_D $ is a consequence of the following observation.

\begin{thm}\label{T:contHurM}
	Let $D \subset \mathbb{C}$ be a domain. Fix $a\in D$ and let $\{a_n\}$ be a sequence in $ D$ converging to $a$. Let $G_n,\, G$ be the normalized Hurwitz coverings at $a_n$ and $a$ respectively.Then $G_n$ converges to $G$ uniformly on compact subsets of $\mathbb{D}$. 
\end{thm}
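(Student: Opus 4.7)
The approach is to combine normal families with Hurwitz's theorem and the extremal characterization of the Hurwitz covering as the unique element of $\mathcal{O}^*(a, D)$ of maximal derivative at $0$. Since each $G_n$ is a holomorphic map $\D \to D$ and $D$ is bounded (the setting of Theorem~\ref{T:TheHurwitzMetric}), Montel's theorem shows that $\{G_n\}$ is a normal family on $\D$. Thus every subsequence of $\{G_n\}$ has a further subsequence converging, uniformly on compact subsets of $\D$, to some holomorphic $\ti G : \D \to \overline D$. I will show that every such subsequential limit coincides with $G$; normality will then force $G_n \to G$ uniformly on compact subsets of $\D$.

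The first step is to verify $\ti G \in \mathcal{O}^*(a, D)$. Clearly $\ti G(0) = \lim a_{n_k} = a$ and $\ti G'(0) = \lim G_{n_k}'(0) \ge 0$. Because each $G_n - a_n$ is non-vanishing on the connected set $\D^*$, Hurwitz's theorem shows that either $\ti G \equiv a$ on $\D$, or $\ti G(z) \ne a$ for every $z \in \D^*$. To rule out the first alternative I would establish the lower-semicontinuity statement $\liminf_n r_D(a_n) \ge r_D(a)$. Given $f \in \mathcal{O}^*(a, D)$ and $s \in (0, 1)$, the set $f(s\overline{\D})$ is a compact subset of $D$, so $\delta := \mathrm{dist}(f(s \overline{\D}), \partial D) > 0$. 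For $n$ large enough that $|a_n - a| < \delta$, the translated shrinking $f_s(z) := f(sz) + (a_n - a)$ maps $\D$ into $D$, satisfies $f_s(0) = a_n$ and $f_s'(0) = s f'(0) > 0$, and equals $a_n$ only at $z = 0$; hence $f_s \in \mathcal{O}^*(a_n, D)$ and $r_D(a_n) \ge s f'(0)$. Passing to $n \to \infty$, then $s \to 1^-$, and finally taking the supremum over $f$ yields $\liminf_n r_D(a_n) \ge r_D(a)$. Consequently $\ti G'(0) = \lim_k G_{n_k}'(0) \ge r_D(a) = G'(0) > 0$, so $\ti G \not\equiv a$, and thus $\ti G \in \mathcal{O}^*(a, D)$.

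Finally, extremality of $G$ in $\mathcal{O}^*(a, D)$ gives $\ti G'(0) \le r_D(a) = G'(0)$, which together with the reverse inequality above forces $\ti G'(0) = G'(0)$. The uniqueness of the extremal in $\mathcal{O}^*(a, D)$, combined with the normalization $\ti G'(0) > 0$, then yields $\ti G = G$. Since every subsequential limit of $\{G_n\}$ equals $G$, we conclude that $G_n \to G$ uniformly on compact subsets of $\D$. I expect the main technical obstacle to be the lower-semicontinuity step, where one must construct near-extremal competitors for $r_D(a_n)$ out of competitors for $r_D(a)$; the Hurwitz-theorem step is routine but essential, since it bridges the purely analytic limit $\ti G$ and the topological non-vanishing condition $\ti G(z) \ne a$ on $\D^*$ that defines the class $\mathcal{O}^*$.
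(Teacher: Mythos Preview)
Your argument is correct and considerably more elementary than the paper's. One small omission: to place $\tilde G$ in $\mathcal{O}^*(a, D)$ you also need $\tilde G(\D) \subset D$ rather than merely $\tilde G(\D) \subset \overline D$; this follows by the same Hurwitz device applied to each $p \in \mathbb{C} \setminus D$, since every $G_n - p$ is non-vanishing on $\D$ and $\tilde G(0) = a \ne p$.

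The paper proceeds quite differently. Instead of exploiting the extremal characterization of $G$, it shows that any subsequential limit $\tilde G$ is itself a holomorphic covering $\D^* \to D \setminus \{a\}$ with $\tilde G'(0) > 0$, whence $\tilde G = G$ by uniqueness of the normalized Hurwitz covering. The lower bound on $G_n'(0)$ comes not from a competitor construction but from Minda's bilipschitz estimate $\eta_D \le 2/\delta_D$, and the covering property of $\tilde G$ is established by factoring the universal coverings $\pi_n : \D \to D \setminus \{a_n\}$ as $G_n \circ \tilde\pi_n$ with $\tilde\pi_n : \D \to \D^*$, then passing to the limit via three preparatory lemmas (convergence of normalized coverings $\D \to \D^*$ with moving basepoints; non-constant limits of such coverings are coverings; convergence of $\pi_n$ via Hejhal's theorem on universal covers of variable regions). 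Your translated-shrinking competitor $f(sz) + (a_n - a)$ together with a single application of Hurwitz replaces all of this machinery, at the modest cost of invoking the uniqueness of the extremal in $\mathcal{O}^*(a, D)$ (which itself is just the lifting-plus-Schwarz argument sketched in the introduction). The paper's longer route does pay a dividend later: the same covering-convergence lemmas are recycled in Lemma~\ref{L:HurCoverScalingConvergence}, where the domain itself varies along the scaled sequence $D_j \to \h$ and a fixed extremal problem is no longer available.
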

\medskip
\noindent
The proof of this requires the following lemmas.
\begin{lem}\label{L:PiPosCon}
	Let $ y_0 \in \D^* $ and $ \{y_n\} $ a sequence in $ \D^* $ converging to $ y_0 $. 
	Let $\pi_n : \mathbb{D} \longrightarrow \mathbb{D}^{*}$ and $\pi_0 : \mathbb{D} \longrightarrow \mathbb{D}^{*}$ be the unique normalized coverings satisfying $\pi_n(0) = y_n$, $\pi_0(0) = y_0$   and $\pi^{\prime}_n(0) > 0$, $\pi^{\prime}_0(0) > 0$. Then  $\{\pi_n \}$ converges to $\pi_0$ uniformly on compact subsets of $ \D $. 
\end{lem}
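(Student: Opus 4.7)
The plan is to prove that $\{\pi_n\}$ forms a normal family whose every locally uniform subsequential limit coincides with $\pi_0$; convergence of the full sequence then follows automatically. The identification of any subsequential limit with $\pi_0$ will rest on Hurwitz's theorem together with the lifting property of the universal covering and the Schwarz lemma.

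First I would invoke Montel's theorem: each $\pi_n$ maps $\mathbb{D}$ into $\mathbb{D}^{\ast} \subset \mathbb{D}$, so $\{\pi_n\}$ is uniformly bounded and hence normal. Let $\pi$ be a locally uniform subsequential limit. Because every covering $\pi_n : \mathbb{D} \rightarrow \mathbb{D}^{\ast}$ is a local isometry for the hyperbolic metrics and $\pi_n'(0) > 0$, one has $\pi_n'(0) = \lambda_{\mathbb{D}}(0)/\lambda_{\mathbb{D}^{\ast}}(y_n)$. Continuity of $\lambda_{\mathbb{D}^{\ast}}$ at the interior point $y_0$ gives $\pi_n'(0) \to \pi_0'(0) > 0$, and locally uniform convergence upgrades this to $\pi'(0) = \pi_0'(0) > 0$. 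Thus $\pi$ is non-constant, the maximum principle yields $\pi(\mathbb{D}) \subset \mathbb{D}$, and since each $\pi_n$ omits the value $0$, Hurwitz's theorem forces $\pi$ to omit $0$ as well. Hence $\pi : \mathbb{D} \rightarrow \mathbb{D}^{\ast}$ is a well-defined holomorphic map with $\pi(0) = y_0$.

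To identify $\pi$ with $\pi_0$, I would use the simple-connectivity of $\mathbb{D}$ to lift $\pi$ through the universal covering $\pi_0$: there exists a holomorphic $\widetilde{\pi} : \mathbb{D} \rightarrow \mathbb{D}$ with $\pi_0 \circ \widetilde{\pi} = \pi$, and since the fiber $\pi_0^{-1}(y_0)$ contains $0$ we can arrange $\widetilde{\pi}(0) = 0$. The chain rule gives
\[
\pi_0'(0) = \pi'(0) = \pi_0'(0)\,\widetilde{\pi}'(0),
\]
so $\widetilde{\pi}'(0) = 1$, and Schwarz's lemma forces $\widetilde{\pi}$ to be the identity. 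Consequently $\pi = \pi_0$; since every subsequential limit equals $\pi_0$, the whole sequence converges locally uniformly.

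The main obstacle is keeping the derivatives $\pi_n'(0)$ bounded away from zero, as otherwise the limit could degenerate to the constant $y_0$ and both the Hurwitz step and the Schwarz lemma step would fail. The continuity of the hyperbolic density $\lambda_{\mathbb{D}^{\ast}}$ at the interior point $y_0$, combined with the local-isometry property of holomorphic coverings, is the crucial ingredient that prevents this degeneration.
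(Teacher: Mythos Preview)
Your proof is correct and follows essentially the same route as the paper: normality of $\{\pi_n\}$, continuity of $\lambda_{\mathbb{D}^{\ast}}$ to pin down $\pi'(0)=\pi_0'(0)>0$, Hurwitz's theorem to keep the limit in $\mathbb{D}^{\ast}$, then lifting through $\pi_0$ and applying the Schwarz lemma to force the lift to be the identity. The paper's argument is the same in substance, only phrased slightly differently (it rules out the boundary and the origin separately before lifting).
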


\begin{proof}
	Note that the hyperbolic density $\lambda_{\mathbb{D}^{*}}$ is continuous and hence $\lambda_{\mathbb{D}^{*}}(y_n) \to \lambda_{\mathbb{D}^{*}}(y)$. Also $\lambda_{\mathbb{D}^{*}} > 0 $ and satisfies
	\[
	 \lambda_{\mathbb{D}^{*}}(y_n) = 1 \big/ \pi^{\prime}_n(0),
	\]
	\[
	\lambda_{\mathbb{D}^{*}}(y_0) = 1\big/\pi^{\prime}_0(0).
	\]
	Let $ \pi_{\infty}$ be the limit point of the family $\{\pi_n\}$ which is a normal. Then $ \pi_{\infty} : \D \longrightarrow \D^*$ and $ \pi_{\infty } (0) =y_0 \in \D^* $. If the image $ \pi_{\infty}(\D^*) $ intersects $ \partial \D $ then $ \pi_{\infty} $ must be constant, i.e., $ \pi_{\infty}(z) = e^{i \theta_0} $ for some $ \theta_0 $. This can not happen since $ \pi_{\infty}(0) = y_0 \in \D^* $. If the image $ \pi_{\infty}(\D^*) $ contains the origin, Hurwitz's theorem shows that $ \pi_n $ must also contains the origin for $ n $ large. Again this is not possible since $ \pi_n(\D) = \D^* $. It follows that $ \pi_{\infty} : \D \longrightarrow \D^* $ with $ \pi_{\infty}(0) = y_0 $.  
    \medskip
    
	Let $ \tilde\pi_{\infty} : \D \longrightarrow \D $ be the lift of $ \pi_{\infty} $ such that $ \tilde\pi_{\infty}(0) =  0 $. By differentiating the identity,
	\[
	\pi_0 \circ \tilde\pi_{\infty} = \pi_{\infty},
	\]
		we obtain
		\[
		\pi_0^{\prime}(\tilde\pi_{\infty}(z)) \tilde\pi_{\infty}^{\prime}(z) = \pi_{\infty}^{\prime}(z)
		\]  
		and evaluating at $ z = 0 $ gives 
		\[
		\pi^{\prime}(0) \tilde \pi_{\infty}^{\prime}(0) = \pi_{\infty}^{\prime}(0).
		\]
		Now observe that $ \pi_n^{\prime}(0) \to \pi_{\infty}^{\prime}(0) $ and since $1 \big/ \pi_{n}^{\prime} (0) = \lambda_{\mathbb{D}^{*}}(y_n) \to \lambda_{\mathbb{D}^{*}}(y_0) $ it follows $ \pi_{\infty}^{\prime}(0) = 1 \big/ \lambda_{\mathbb{D}^{*}}(y_0) = \pi_0^{\prime}(0) > 0 $. Therefore, $ \tilde \pi_{\infty}^{\prime}(0) = 1  $ and hence $ \pi_{\infty}^{\prime}(z) = z $ by the Schwarz lemma. As a result, $ \pi_{\infty} = \pi_0 $ on $ \D $. If follows that $ \{ \pi_n \} $ has unique limit, namely $ \pi_0 $.         
\end{proof}

\begin{lem}\label{L:UniformConvergenceOfPunCoveringMpas}
	Let $\pi_n : \mathbb{D} \longrightarrow \mathbb{D}^{*}$  be a sequence of holomorphic coverings. Let $\pi$ be a non-constant limit point of the family  $\{\pi_n\}$. Then $\pi : \mathbb{D} \longrightarrow \mathbb{D}^{*}$ is a covering.
\end{lem}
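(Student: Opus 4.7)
First, extract a subsequence along which $\pi_n \to \pi$ uniformly on compact subsets of $\D$. The plan is to first verify that $\pi$ actually maps into $\D^{*}$ (rather than merely into $\overline{\D}$), then to show that $\pi$ is a local hyperbolic isometry by passing to the limit in the isometry identity satisfied by each $\pi_n$, and finally to deduce that $\pi$ is a covering by lifting $\pi$ through a standard universal covering $\pi_0 : \D \longrightarrow \D^{*}$ and invoking the equality case of Schwarz--Pick.

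For the first step, since $\pi_n(\D) \subset \D^{*} \subset \D$ for every $n$, the limit satisfies $|\pi| \leq 1$ on $\D$; non-constancy together with the maximum modulus principle upgrades this to $\pi(\D) \subset \D$. Because none of the $\pi_n$ takes the value $0$, Hurwitz's theorem applied to the non-constant limit $\pi$ rules out $\pi(z) = 0$ anywhere on $\D$, giving $\pi : \D \longrightarrow \D^{*}$.

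Next I would use that each $\pi_n$, being a holomorphic covering between the hyperbolic Riemann surfaces $\D$ and $\D^{*}$, is a local isometry of the respective hyperbolic metrics:
\[
\lambda_{\D^{*}}(\pi_n(z))\,|\pi_n'(z)| \,=\, \lambda_{\D}(z) \qquad (z \in \D).
\]
By Weierstrass, $\pi_n' \to \pi'$ uniformly on compacts; combined with the continuity of $\lambda_{\D^{*}}$ at $\pi(z) \in \D^{*}$, this identity passes to the limit to yield
\[
\lambda_{\D^{*}}(\pi(z))\,|\pi'(z)| \,=\, \lambda_{\D}(z) \qquad (z \in \D),
\]
so $\pi'$ is nowhere zero and $\pi$ itself is a hyperbolic local isometry.

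For the last step, I would lift $\pi$ through a universal covering $\pi_0 : \D \longrightarrow \D^{*}$. Since $\D$ is simply connected there is a unique holomorphic $\tilde\pi : \D \longrightarrow \D$ (after fixing basepoints) with $\pi_0 \circ \tilde\pi = \pi$; using that $\pi_0$ is itself a hyperbolic local isometry, the chain rule converts the isometry identity for $\pi$ into $\lambda_{\D}(\tilde\pi(z))\,|\tilde\pi'(z)| = \lambda_{\D}(z)$, which is the equality case of Schwarz--Pick and therefore forces $\tilde\pi \in \mathrm{Aut}(\D)$. Hence $\pi = \pi_0 \circ \tilde\pi$ is a universal covering, in particular a covering. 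The step that requires the most care is the very first one: confirming that $\pi$ takes values in $\D^{*}$, since without this the isometry identity for the limit cannot even be formulated. The maximum principle and Hurwitz together handle the $|\pi|<1$ and $\pi\neq 0$ portions, and everything downstream is a standard application of the hyperbolic-isometry characterization of holomorphic coverings between hyperbolic Riemann surfaces.
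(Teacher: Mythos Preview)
Your proof is correct, but it follows a genuinely different route from the paper. Both arguments begin identically, using the maximum principle and Hurwitz's theorem to show that the non-constant limit $\pi$ maps $\D$ into $\D^{*}$. From there the paper proceeds by \emph{normalization}: it precomposes each $\pi_{k_n}$ with a rotation $\phi_n(z)=e^{i\theta_n}z$ so that the derivative at the origin becomes positive, and then invokes the preceding Lemma~\ref{L:PiPosCon} (uniqueness and stability of normalized coverings with prescribed base point) to conclude that the rotated sequence converges to a covering $\tilde\pi$; finally $\pi=\tilde\pi\circ\phi^{-1}$ is a covering since $\phi$ is an automorphism. Your argument bypasses Lemma~\ref{L:PiPosCon} entirely: you pass to the limit directly in the hyperbolic isometry identity $\lambda_{\D^{*}}(\pi_n(z))\,|\pi_n'(z)|=\lambda_{\D}(z)$, lift the limit through a fixed universal cover $\pi_0:\D\to\D^{*}$, and finish with the equality case of Schwarz--Pick. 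Your approach is more self-contained and conceptually transparent (it exhibits $\pi$ explicitly as $\pi_0$ composed with a disc automorphism), while the paper's approach is more elementary in that it uses only the classical Schwarz lemma and keeps the covering-stability statement of Lemma~\ref{L:PiPosCon} available for separate use.
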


\begin{proof}
	Suppose that $ \pi_{n_k} \to \pi $ uniformly on compact subsets of $ \D $. As in the previous lemma, $ \pi : \D \longrightarrow \D^* $ since $ \pi $ is assumed to be non-constant. In particular, $ \pi_{n_k}(0) \to \pi(0)  \in \D^*$. Let 
	\[
	\phi_n(z) = e^{i \theta_n}z
	\]
	where $ \theta_n = Arg(\pi_{k_n}^{\prime}(0)) $ and consider the compositions
	\[
	\tilde \pi_{k_n} = \pi_{k_n} \circ \phi_n
	\]
	which are holomorphic covering of $ \D^* $ satisfying $ \tilde \pi_{k_n}^{\prime}(0) > 0$ and $ \tilde \pi_{k_n}(0) = \pi_{k_n}(0) \to  \pi(0) \in \D^* $. By Lemma~\ref{L:PiPosCon}, $  \tilde \pi_{k_n} \to \tilde \pi  $ where $ \tilde \pi : \D \longrightarrow \D^* $ is a holomorphic covering with $ \tilde \pi(0) = \pi(0)  $ and $ \tilde \pi^{\prime}(0) > 0 $. By passing to a subsequence, $ \phi_n(z) \to \phi(z) $ whree $ \phi(z) = e^{i\theta_0}z  $ for some $ \theta_0 $. As a result, 
	\[
	\tilde \pi = \pi \circ \phi
	\]
	and this shows that $ \pi : \D \longrightarrow \D^* $ is a holomorphic covering.
\end{proof}
\medskip

\begin{lem}\label{L:HyperbolicCoveringConvergenc}
	Let $D \subset \C$ be bounded. Fix $ a \in D $ and let $\{a_n\}$ be a sequence in $ D $ converging to $a$. Set $ D_n = D\setminus \{x_n\}$ and $ D_0 = D\setminus \{a\} $. Fix a base point $p \in D \setminus \{a, a_1, a_2, \dots \}$. Let $\pi_n : \mathbb{D} \longrightarrow D_n $ and $\pi_0 : \mathbb{D} \longrightarrow D_0 $ be the unique normalized coverings such that $\pi_n(0) = \pi_0(0) = p $ and $\pi^{\prime}_n(0), \, \pi^{\prime}_0(0) > 0$. Then $\pi_n \to \pi_{0}$ uniformly on compact subsets of $\mathbb{D}$. 
\end{lem}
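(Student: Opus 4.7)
My plan is to argue via normal families: show that $\{\pi_n\}$ is normal and that every subsequential limit must coincide with $\pi_0$, whence the whole sequence converges.

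\medskip

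First, since $D$ is bounded and $\pi_n(\D) \subset D_n \subset D$, the family $\{\pi_n\}$ is uniformly bounded on $\D$ and hence normal by Montel's theorem. To prevent subsequential limits from collapsing to constants, I would produce a positive lower bound on $\pi_n'(0)$: pick $r > 0$ with $\bar{B}(p,r) \subset D$ and $a \notin \bar{B}(p,r)$; since $a_n \to a$, one has $\bar{B}(p,r) \subset D_n$ for all large $n$, and monotonicity of the hyperbolic metric gives $\lambda_{D_n}(p) \leq \lambda_{B(p,r)}(p) = 1/r$, so $\pi_n'(0) = 1/\lambda_{D_n}(p) \geq r$ for all large $n$.

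\medskip

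Next, let $\pi_{n_k} \to \pi$ uniformly on compact subsets of $\D$. Then $\pi$ is holomorphic with $\pi(0) = p$ and $\pi'(0) \geq r > 0$, hence non-constant, and the open mapping principle forces $\pi(\D) \subset D$. To see that $\pi$ avoids $a$, note that each $\pi_{n_k}(z) - a_{n_k}$ is non-vanishing on $\D$ (as $\pi_{n_k}(\D) \subset D \setminus \{a_{n_k}\}$) and converges locally uniformly to $\pi(z) - a$. By Hurwitz's theorem, $\pi - a$ is either identically zero or nowhere zero, and the former is excluded because $\pi(0) = p \neq a$. Hence $\pi : \D \to D_0$ with $\pi(0) = p$ and $\pi'(0) > 0$.

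\medskip

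To identify $\pi$ with $\pi_0$, I would lift $\pi$ through the universal covering $\pi_0 : \D \to D_0$: since $\D$ is simply connected, there is a unique holomorphic $\tilde\pi : \D \to \D$ with $\pi_0 \circ \tilde\pi = \pi$ and $\tilde\pi(0) = 0$. Differentiating at $0$ yields $\tilde\pi'(0) = \pi'(0)/\pi_0'(0)$. If one establishes the convergence of derivatives $\pi_n'(0) \to \pi_0'(0)$, equivalently $\lambda_{D_n}(p) \to \lambda_{D_0}(p)$, then $\tilde\pi'(0) = 1$ and Schwarz's lemma forces $\tilde\pi = \mathrm{id}$, so $\pi = \pi_0$. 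Since every subsequential limit of the normal family $\{\pi_n\}$ is then $\pi_0$, the full sequence $\pi_n$ converges to $\pi_0$ uniformly on compact subsets of $\D$.

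\medskip

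The main obstacle is the convergence $\lambda_{D_n}(p) \to \lambda_{D_0}(p)$. The Schwarz-lemma lift already produces $\pi'(0) \leq \pi_0'(0)$, so the real content is the reverse inequality. This is the continuity of the hyperbolic density under Carath\'eodory kernel convergence: every compact subset of $D_0$ avoids $a$ and hence lies in $D_n$ for large $n$, while any compact neighbourhood of $a$ eventually contains $a_n$, so the kernel of $\{D_n\}$ at $p$ is precisely $D_0$. The standard Carath\'eodory kernel theorem, or equivalently the continuity of the hyperbolic density under such kernel convergence, furnishes the claim; in the framework of this paper, this is the step that would most need either a careful citation or a short self-contained argument parallel in spirit to Lemma~\ref{L:UniformConvergenceOfPunCoveringMpas}.
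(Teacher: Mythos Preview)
Your argument is correct and takes a genuinely different, more elementary route than the paper. The paper's proof is very short: it moves $p$ to $\infty$ by $T(z)=1/(z-p)$, notes that the transformed domains $\tilde D_n=T(D_n)$ converge to $\tilde D_0=T(D_0)$ in the Carath\'eodory kernel sense, and then simply cites Hejhal's theorem on universal covering maps for variable regions to conclude $\tilde\pi_n\to\tilde\pi_0$. Your approach instead unpacks, in this special case, what Hejhal's theorem is doing, via normal families, Hurwitz, and the Schwarz lemma.

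The step you flag as the ``main obstacle''---the reverse inequality $\liminf_n \pi_n'(0)\ge \pi_0'(0)$---can be handled directly and you could include it to make the proof fully self-contained: for any $0<r<1$, the set $\pi_0(\{|z|\le r\})$ is a compact subset of $D_0$ and hence lies in $D_n$ for all large $n$, so $z\mapsto \pi_0(rz)$ is a competitor in the extremal problem defining $\lambda_{D_n}(p)$, giving $\pi_n'(0)\ge r\,\pi_0'(0)$; now let $r\to 1$. One caution on your proposed citation: the classical Carath\'eodory kernel theorem concerns \emph{biholomorphisms} of simply connected domains, not universal coverings of multiply connected ones; the covering-map version is precisely Hejhal's result, which is what the paper invokes. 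Your route buys a self-contained argument avoiding that black-box citation; the paper's route buys brevity.
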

\begin{proof}
	Move $ p $ to $ \infty $ by $ T(z) = 1\big/(z - p) $ and let $\dnt = T(D_n)$, $\tilde D_0 = T(D_0)$. Then 
	\[
	\pint = T \circ \pin : \D \longrightarrow \dnt 
	\]
	and 
	\[\pit = T \circ \pi : \D \longrightarrow \tilde D_0 
	\]
	are coverings that satisfy
	\[
	\lim_{z \to 0} z\pint(z) = \lim_{z \to 0} \frac{z}{\pin(z) - \pin(0)} = \frac{1}{\pin^{\prime}(0)} > 0 
	\]
	and 
	\[
	\lim_{z \to 0} z\pit(z) = \lim_{z \to 0} \frac{z}{\pi(z) - \pi(0)} = \frac{1}{\pi^{\prime}(0)} > 0. 
	\]
	It is evident that the domains $ \dnt $ converge to $ \tilde D_0 $ in the Carath\'eodory kernel sense and Hejhal's result \cite{Hejhal} shows that
	\[
	\pit_n \to \tilde \pi_0
	\] uniformly on compact subsets of $ \D $. This completes the proof.   
	
\end{proof}
\begin{proof}[Proof of the Theorem~\ref{T:contHurM}]
	Fix a point $a \in D$ and let $ a_n \to a $. Let $ G_n, \, G$ be the normalized Hurwitz coverings at $ a_n, a $ respectively. Since $ D $ is bounded, $ \{ G_n \} $ is a normal family. Assume that $ G_n \to \tilde G $ uniformly on compact subsets of $ \D $. 
	\medskip
	
	By Theorem~6.4 of \cite{TheHurwitzMetric},

	\begin{equation}\label{Eq:bilipchitzCondition}
	1\big/8 \delta_D(z) \leq \eta_D(z) \leq 2\big/ \delta_D(z)
	\end{equation}
	where $ \delta_D(z) = {\rm dist}(z, \partial D) $. By definition, $ \eta_D(a_n) = 1\big/G_n^{\prime}(0) $ which gives
	\[
	2 \delta_D(a_n) \leq G_n^{\prime}(0) \leq 8 \delta_D(a_n)
	\]
	
	for all $ n $ and hence
	\[
	2 \delta_D(a) \leq \tilde G^{\prime}(0) \leq 8 \delta_D(a).
	\]
	Since $ \delta_D(a_n),\, \delta_D(a) $ have uniform positive lower and upper bounds, it follows that $ G_n^{\prime}(0)  $ and $ \tilde G^{\prime}(0) $ admits uniform lower and upper bounds as well. We will now use the following fact which is a consequence of the inverse function theorem. 
	\medskip
	
	{\bf \textit{Claim}}: Let $ f : \Omega \longrightarrow \C $ be holomorphic and suppose that $ f^{\prime}(z_0) \neq 0 $ for some $ z_0 \in 
	\Omega $. Then there exists $ \delta > 0 $ such that $ f : B(z_0, \delta) \longrightarrow G_n^{\prime}(0) $ is biholomorphic and $ B(f(z_0), \delta \vert f^{\prime}(z_0) \vert \big/2) \subset f(B(z_0, \delta))$. 
	\medskip
	
	To indicate a short proof of this claim, let $ \delta > 0 $ be such that 
	\begin{equation*}\label{Eq:DerIneq}
	\vert f^{\prime}(z) - f^{\prime}(z_0) \vert < \vert f^{\prime
	}(z_0) \vert\big/2
	\end{equation*}
	for all $ \vert z - z_0 \vert < \delta $. Then
	$ g(z) = f(z) - f^{\prime}(z_0)z $ satisfies 
	\[
	\vert g^{\prime}(z) \vert \leq 1\big/2 \vert f^{\prime}(z_0) \vert 
	\]
	and hence
	\begin{equation*}\label{Eq:LipCon}
	\vert f(z_2) - f(z_1) - f^{\prime}(z_0)z(z_2 - z_1) = 
	\vert g(z_2) - g(z_1)  \vert \leq  \vert f^{\prime}(z_0)\vert \vert z_2 - z_1 \vert \big/2.
    \end{equation*}
	This shows that $ f $ is injective on $ B(z_0, \delta) $. Finally, if $ w \in B(f(z_0), \delta \vert f^{\prime}(z_0)\vert \big/2) $, then 
	\[
	z_k = z_{k - 1} - \frac{w - f^{\prime}(z_{k - 1})}{f^{\prime}(z_0)}
	\]
	defines a Cauchy sequence which is compactly contained in $ B(z_0, \delta) $. It converges to $ \tilde z \in B(z_0, \delta)  $ such that $ f(\tilde z) = w $. This shows that $ B(f(z_0), \delta \vert f^{\prime}(z_0) \vert \big/2) \subset f(B(z_0, \delta))$.
	\medskip
	
	Let $ m, \delta > 0 $ be such that
	\[
	\vert \tilde G^{\prime}(z) - \tilde G^{\prime}(0) \vert < m \big/2 < m \leq \frac{\vert \tilde G^{\prime}(0) \vert}{2}
	\] 
	for $ \vert z \vert < \delta $. Since $ G_n $ converges to $ \tilde G $ uniformly on compact subsets of $ \D $,
	\[
	\vert G_n^{\prime}(0) \vert\big/2 \geq \vert \tilde G^{\prime}(0) \vert\big/2 - \vert G_n^{\prime}(0) - \tilde G^{\prime}(0) \vert\big/2 \geq m - \tau
	\]
	for $ n $ large. Here $ 0 < \tau < m $. On the other hand,
	\[
	\vert G_n^{\prime}(z) - G_n^{\prime}(0) \vert 
	\leq 
	\vert  \tilde G^{\prime}(z) - \tilde  G^{\prime}(0) \vert 
	+ \vert G_n^{\prime}(z) - \tilde G^{\prime}(z) \vert 
	+ \vert G_n^{\prime}(0) - \tilde  G^{\prime}(0) \vert \leq m \big/2 + \epsilon + \epsilon
	\]
	for $ \vert z \vert < \delta $
	and $ n $ large enough. Therefore, if $ 2 \epsilon + \tau < m\big/2 $, then 
	\[
	\left\vert G_n^{\prime}(z) - G_n^{\prime}(0) \right \vert \leq m \big/2 + 2 \epsilon < m - \tau < \vert G_n^{\prime}(0) \vert \big/2
	\]
	for $ \vert z \vert < \delta $ and $ n $ large enough. It follows from the claim that there is a ball of uniform radius, say $ \eta > 0 $ which is contained in the images $ G_n(B(0, \delta)) $. Choose $ p \in B(0, \eta) $. This will serve as a base point in the following way. Let $ \pi_n : \D \longrightarrow D_n = D \setminus \{
    a_n	\} $ be holomorphic coverings such $ \pi_n(0) = p $. Then there exist holomorphic coverings $ \tilde \pi_n : \D \longrightarrow \D^* $ such that    
	\[
	\begin{tikzcd}
	\mathbb{D} \arrow{r}{\tilde \pi_n} \arrow[swap]{dr}{\pi_n} & \mathbb{D}   ^{*} \arrow{d}{G_n} \\
	& D_n
	\end{tikzcd}
	\]
	commutes, i.e., $ G_n \circ \tilde \pi_n = \pi_n $. By locally inverting $ G_n $ near the origin,
	\[
	\tilde \pi_n(0) = G_n^{-1} \circ \pi_n(0) = G_n^{-1}(p).
	\]
	The family $ \{\tilde \pi_n\} $ is normal and admits a convergent subsequence. Let $ \tilde \pi_0 $ be a limit of $ \tilde \pi_{k_n} $. The image $ \tilde \pi_0(\D)  $ can not intersect $ \partial \D $ as otherwise $ \tilde \pi_0(z) = e^{i \theta_0} $ for some $ \theta_0 $. This contradicts the fact that $ \tilde \pi_{k_n}(0) = G_{k_n}^{-1}(p)$ is compactly contained in $ \D $. If $ \tilde \pi_0(\D) $ were to contain the origin, then $ \tilde \pi_0(z) \equiv 0 $ as otherwise $ \tilde \pi_n(\D) $ would also contain the origin by Hurwitz's theorem. The conclusion of all this is that $ \tilde \pi_0 : \D \longrightarrow \D^* $ is non-constant and hence a covering by Lemma~\ref{L:UniformConvergenceOfPunCoveringMpas}. By Lemma~\ref{L:HyperbolicCoveringConvergenc}, $ \pi_n \to \pi_0 $ where $ \pi_0 : \D \longrightarrow \D_a = D \setminus \{a  \}$ is a covering and hence by passing to the limits in 
	\[
	G_n \circ \tilde \pi_n = \pi_n
	\] 
	we get
	\[
	\tilde G \circ \tilde \pi_n = \pi_0.
	\] 
	This shows that $ \tilde G : \D^* \longrightarrow D_a$ is a covering. As noted earlier $ \tilde G^{\prime}(0) > 0 $ and this means that $ \tilde G = G $, the normalized Hurwitz covering at a.    
\end{proof}
\medskip

\begin{lem}\label{L:HurCoverScalingConvergence}
	Let $ D_j $ be the sequence of domains that converges to $ \h $ as in Proposition~\ref{Prop:ConvergenceAhlforsSzegoGarabeidian}. Let $ z_j $ be a sequence converging to $ a \in \h $ satisfying $ z_j \in D_j $ for all $ j $ and let $ G_j $ be the normalized Hurwitz covering of $ D_j$ at the point $ z_j $ and $ G $ be the normalized Hurwitz covering of $ \h$ at $ a $. Then $ G_j $ converges to $ G $ uniformly on compact subsets of $ \D $. 
\end{lem}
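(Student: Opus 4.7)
The plan is to follow the proof of Theorem~\ref{T:contHurM} with essentially the same structure, the only difference being that the ambient domain now varies with $j$. First I would establish that $\{G_j\}$ is a normal family on $\D$. The bi-Lipschitz estimate (\ref{Eq:bilipchitzCondition}) gives
\[
G_j'(0) = 1\big/\eta_{D_j}(z_j) \approx \delta_{D_j}(z_j),
\]
and the Hausdorff convergence $D_j \to \h$ together with $z_j \to a \in \h$ forces $\delta_{D_j}(z_j) \to \delta_{\h}(a) \in (0, \infty)$, so $G_j'(0)$ admits uniform positive upper and lower bounds. For local uniform boundedness of $G_j$ itself, observe that for $j$ large $D_j$ is contained in a slightly enlarged half-space $\tilde{\h} \supset \h$ (again by Hausdorff convergence), and by monotonicity of the hyperbolic metric together with Schwarz--Pick,
\[
G_j(B(0, r)) \subset B_{D_j}(z_j, R(r)) \subset B_{\tilde{\h}}(z_j, R(r))
\]
where $R(r)$ depends only on $r$; the last set is a fixed Euclidean bounded subset of $\C$ once $z_j$ is close to $a$, which delivers normality.

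Extract a subsequential limit $\tilde G$ of $\{G_j\}$. Then $\tilde G : \D \to \overline{\h}$ is holomorphic with $\tilde G(0) = a \in \h$ and $\tilde G'(0) > 0$; the open mapping theorem gives $\tilde G(\D) \subset \h$. To identify $\tilde G$ with the Hurwitz covering $G$, apply the inverse function theorem claim from the proof of Theorem~\ref{T:contHurM} to produce $\delta, \eta > 0$ such that $B(a, \eta) \subset G_j(B(0, \delta))$ for all large $j$. Fix a base point $p \in B(a, \eta) \setminus \{a, z_1, z_2, \ldots\}$ and let $\pi_j : \D \longrightarrow D_j \setminus \{z_j\}$ be the normalized universal coverings with $\pi_j(0) = p$ and $\pi_j'(0) > 0$. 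Since $D_j \setminus \{z_j\}$ converges to $\h \setminus \{a\}$ in the Carath\'eodory kernel sense with base point $p$ (a direct consequence of $D_j \to \h$ Hausdorff-ly and $z_j \to a$), Hejhal's theorem, as invoked in Lemma~\ref{L:HyperbolicCoveringConvergenc}, gives $\pi_j \to \pi_0$ uniformly on compact subsets of $\D$, where $\pi_0 : \D \longrightarrow \h \setminus \{a\}$ is the normalized universal covering.

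Lift through $G_j$: using local invertibility of $G_j$ near $0$, there exist holomorphic coverings $\tilde\pi_j : \D \longrightarrow \D^*$ with $G_j \circ \tilde\pi_j = \pi_j$ and $\tilde\pi_j(0) = G_j^{-1}(p) \in B(0, \delta)$, so the sequence $\{\tilde\pi_j(0)\}$ stays compactly inside $\D^*$. By the same alternatives-are-ruled-out argument as in Theorem~\ref{T:contHurM} (Hurwitz's theorem rules out the limit landing on $\partial \D$ or collapsing to $0$), any limit $\tilde\pi_0$ of $\{\tilde\pi_j\}$ is a non-constant map $\D \longrightarrow \D^*$, and Lemma~\ref{L:UniformConvergenceOfPunCoveringMpas} upgrades it to a holomorphic covering. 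Passing to the limit in $G_j \circ \tilde\pi_j = \pi_j$ yields $\tilde G \circ \tilde\pi_0 = \pi_0$, which exhibits $\tilde G$ as a holomorphic covering $\D^* \longrightarrow \h \setminus \{a\}$ extending to $\D \longrightarrow \h$ with $\tilde G(0) = a$. Combined with the normalization $\tilde G'(0) > 0$ this forces $\tilde G = G$. Since every subsequential limit of $\{G_j\}$ equals $G$, the full sequence converges to $G$ uniformly on compact subsets of $\D$.

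The main obstacle I anticipate is the careful verification that $D_j \setminus \{z_j\} \to \h \setminus \{a\}$ in the Carath\'eodory kernel sense with the uniformly chosen base point $p$ produced by the inverse function theorem claim, so that Hejhal's theorem applies cleanly to a sequence whose domains and base points are both moving. Once this is in place, every subsequent step is a direct transcription of the arguments of Lemma~\ref{L:PiPosCon}, Lemma~\ref{L:UniformConvergenceOfPunCoveringMpas}, and Theorem~\ref{T:contHurM}, with the moving domain $D_j$ replacing the fixed $D$.
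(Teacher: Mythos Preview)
Your overall architecture matches the paper's proof almost exactly: choose a common base point $p$ near $a$, invoke Hejhal's theorem for the universal covers $\pi_j:\D\to D_j\setminus\{z_j\}$, lift through $G_j$ to get coverings $\D\to\D^*$, use Lemma~\ref{L:UniformConvergenceOfPunCoveringMpas} on the limit, and identify the limit of $G_j$ via the uniqueness of the normalized Hurwitz covering together with the bilipschitz bound~(\ref{Eq:bilipchitzCondition}).

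There is, however, one genuine error in your normality argument. The claim that ``for $j$ large $D_j$ is contained in a slightly enlarged half-space $\tilde\h\supset\h$'' is not a consequence of the local Hausdorff convergence used in this paper. The domains $D_j=T_j(D)$ are rescalings of the full bounded domain $D$, and a point $q\in D$ far from $p$ is sent to $T_j(q)=(q-p_j)/(-\psi(p_j))$, which escapes to infinity in the direction $q-p$; unless $D$ happens to lie entirely on one side of the tangent line at $p$, these images need not lie in any fixed half-space. So your Schwarz--Pick containment $B_{D_j}(z_j,R(r))\subset B_{\tilde\h}(z_j,R(r))$ is unjustified. The paper sidesteps this entirely: it fixes any compact $K\subset\C\setminus\overline\h$, observes that $K\cap D_j=\emptyset$ for $j$ large (this \emph{is} part of the Hausdorff convergence), so each $G_j$ omits $K$ and Montel's theorem gives normality directly. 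With this single substitution your proof goes through and is essentially identical to the paper's.
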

\begin{proof}
	Since $ D_j $ converges to $ \h $ in the Hausdorff sense  and $ z_j \to a $, there exist $ r > 0 $ and a point $ p \in \h $ such that $p \in  B(z_j, r) \subset D_j $ for all large $ j $ -- for simplicity we may assume for all $ j $ -- and $p \in  B(a, r) \subset \h $ satisfying $ p \neq z_j $ and $ p \neq a $ and $ G_j $ is locally invertible in a common neighborhood containing $ p $ for all $ j $. Let $ \pi_j  : \D \longrightarrow D_j \setminus \{z_j\}$ be the holomorphic coverings such that $ \pi_j(0) = p$ and $ \pi^{\prime}(0) > 0 $. Let $ \pi_{0 j} : \D \longrightarrow \D^* $ be holomorphic coverings such that $ \pi_j = G_j \circ \pi_{0 j} $. Since $ D_j \setminus \{z_j\} $ converges to $ \h\setminus \{a\} $ in the Carath\'eodory kernel sense, by Hejhal's result \cite{Hejhal}, $ \pi_j $ converges to $ \pi $ uniformly on compact subsets of $ \D $, where $ \pi : \D \longrightarrow \h\setminus \{a\}  $ is the holomorphic covering such that $ \pi(0) = p $ and $ \pi^{\prime}(0) > 0 $. 
	\medskip
	
	Since $ D_j \to \h $ in the Hausdorff sense, any compact set $ K $ with empty intersection with $ \overline \h $ will have no intersection with $ D_j $ for $ j $ large and as a result the family $ \{G_j\} $ is normal. Also note that $ \pi_{0 j}(\D) = \D^* $ for all $ j $, and this implies that $ \{\pi_{0 j}\}$ is a normal family. Let $ G_0 $ and $ \pi_0 $ limits of  $ \{G_j\} $ and $ \{\pi_j\} $ respectively. Now, together with the fact that $ \pi_0 $ is non-constant -- guaranteed by its construction   -- and the identity $ \pi_j  = G_j \circ \pi_{0 j}$, we have $ \pi = G_0 \circ \pi_0 $. This implies that $ G_0 : \D^* \longrightarrow \h_a = \h \setminus \{a \} $ is a covering since $ \pi_0 $ is a covering which follows by Lemma~\ref{L:UniformConvergenceOfPunCoveringMpas}. Since $ G_j(0) = z_j $ and $ G_j^{\prime}(0) > m > 0 $ -- for some constant $ m > 0 $ which can be obtained using the bilipschitz condition of the Hurwitz metric, for instance, see inequality~ (\ref{Eq:bilipchitzCondition}) -- it follows $ G_0(0) = a $  and $ G_0^{\prime}(0) \geq m $. This shows that $ G_0 $ is the normalized Hurwitz covering, in other words $ G_0 = G $. Thus, we showed that any limit of $ G_j $ is equal to $ G $ and this proves that $ G_j $ converges to $ G $ uniformly on compact subsets of $ \D $.
\end{proof}
\begin{lem}\label{L:ConvergenceOfTheHurwitzMetricScaling}
	Let $\{D_j\}$ be the sequence of domains that converges to the half-space $\h$ as in Proposition~\ref{Prop:ConvergenceAhlforsSzegoGarabeidian}. Then $\eta_{D_j}$ converges to $\eta_{\h}$ uniformly on compact subsets of $\h$.  
\end{lem}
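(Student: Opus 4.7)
The plan is to deduce the uniform convergence from the already-established convergence of Hurwitz coverings (Lemma~\ref{L:HurCoverScalingConvergence}) via a standard subsequence-extraction argument, using the continuity of the Hurwitz metric on $\h$ (Theorem~\ref{T:contHurM}) and the bilipschitz bound (\ref{Eq:bilipchitzCondition}) to control quotients.

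First I would record the pointwise formula: for each $z \in \h$ and each $j$ large enough that $z \in D_j$, if $G_{z,j}$ denotes the normalized Hurwitz covering of $D_j$ at $z$ and $G_z$ the normalized Hurwitz covering of $\h$ at $z$, then $\eta_{D_j}(z) = 1/G_{z,j}'(0)$ and $\eta_{\h}(z) = 1/G_z'(0)$. Fix a compact set $K \subset \h$. By the Hausdorff convergence $D_j \to \h$ we may assume $K \subset D_j$ for all $j$. Suppose, towards a contradiction, that $\eta_{D_j} \not\to \eta_{\h}$ uniformly on $K$. Then there exist $\epsilon_0 > 0$, a subsequence $\{j_k\}$ and points $z_{j_k} \in K$ with
\[
|\eta_{D_{j_k}}(z_{j_k}) - \eta_{\h}(z_{j_k})| > \epsilon_0.
\]
Since $K$ is compact we may pass to a further subsequence and assume $z_{j_k} \to z_0 \in K \subset \h$.

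Next I would invoke Lemma~\ref{L:HurCoverScalingConvergence} with this sequence: the normalized Hurwitz coverings $G_{z_{j_k}, j_k}$ of $D_{j_k}$ at $z_{j_k}$ converge uniformly on compact subsets of $\D$ to the normalized Hurwitz covering $G_{z_0}$ of $\h$ at $z_0$. In particular, $G_{z_{j_k}, j_k}'(0) \to G_{z_0}'(0)$. To legitimately invert this convergence, I would use the bilipschitz bound (\ref{Eq:bilipchitzCondition}), which gives
\[
2\,\delta_{D_{j_k}}(z_{j_k}) \le G_{z_{j_k}, j_k}'(0) \le 8\,\delta_{D_{j_k}}(z_{j_k}).
\]
Since $z_{j_k} \to z_0 \in \h$ and $D_{j_k} \to \h$ in the Hausdorff sense, the distances $\delta_{D_{j_k}}(z_{j_k})$ are bounded away from $0$ and from $\infty$. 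Hence $G_{z_{j_k},j_k}'(0)$ is bounded away from $0$, and
\[
\eta_{D_{j_k}}(z_{j_k}) = \frac{1}{G_{z_{j_k}, j_k}'(0)} \; \lrarw \; \frac{1}{G_{z_0}'(0)} = \eta_{\h}(z_0).
\]

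Finally, by Theorem~\ref{T:contHurM} the Hurwitz density $\eta_{\h}$ is continuous on $\h$, so $\eta_{\h}(z_{j_k}) \to \eta_{\h}(z_0)$ as well. Combining the two limits yields
\[
|\eta_{D_{j_k}}(z_{j_k}) - \eta_{\h}(z_{j_k})| \lrarw 0,
\]
contradicting the choice of $\epsilon_0$. The main obstacle is the passage from the locally uniform convergence of coverings to the convergence of their derivatives at $0$ in a way that survives inversion; this is exactly where (\ref{Eq:bilipchitzCondition}) is needed, together with the fact that the limiting base point $z_0$ lies in the interior of $\h$ so that $\delta_{D_{j_k}}(z_{j_k})$ is uniformly controlled. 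Everything else is a routine normal-family / compactness argument.
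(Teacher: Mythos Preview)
Your argument is correct and follows essentially the same route as the paper: a contradiction via subsequence extraction, continuity of $\eta_{\h}$, and Lemma~\ref{L:HurCoverScalingConvergence} to force $\eta_{D_{j_k}}(z_{j_k}) \to \eta_{\h}(z_0)$. The appeal to (\ref{Eq:bilipchitzCondition}) is harmless but unnecessary here, since $G_{z_{j_k},j_k}'(0) \to G_{z_0}'(0) > 0$ already guarantees the reciprocals converge.
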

\begin{proof}
	Let $K \subset \mathcal{H}$ be a compact subset. Without loss of generality, we may assume that $K \subset D_j$ for all $j$. 
	\medskip
	
	If possible assume that $\eta_{D_j}$ does not converges to $\eta_{\mathcal{H} }$ uniformly on $K$. 
	Then there exist $\epsilon_{0} >0 $, a sequence of integers $\{ k_j \}$ and sequence of points $\{ z_{k_j}\}\subset K \subset D_{k_j}$ such that
	\[
	\vert \eta_{D_{k_j}}(z_{k_j})- \eta_{\mathcal{H}}(z_{k_j}) \vert >\epsilon_0.
	\]      
	Since $K$ is compact, for simplicity, we assume $z_{k_j}$ converges to a point $z_0 \in K$.
	Using the continuity of the Hurwitz metric, we have 
	\[
	\vert \eta_{\mathcal{H}}(z_{k_j})- \eta_{\mathcal{H}}(z_0) \vert < \epsilon_0\big/2
	\] 
	for $j$ large. 
	By the triangle inequality 
	\[
	\vert \eta_{D_{k_j}}(z_{k_j})- \eta_{\mathcal{H}}(z_0) \vert > \epsilon_0\big/2.
	\]
	Since the domains $D_{k_j}$ are bounded there exist normalized Hurwitz coverings $ G_{k_j} $ of $ D_{k_j} $ at $ z_{k_j} $. Using the convergence of $ D_{k_j}\setminus \{z_{k_j}\} $ to $ \h \setminus \{z_0\}  $ in the Carath\'eodory kernel sense we have by Lemma~\ref{L:HurCoverScalingConvergence}, $ G_{k_j} $ converges to the normalized Hurwitz covering map $ G $ of $ \h $ at the point $ z_0 $ uniformly on every compact subset of $ \h $ as $ j \to \infty $. But we have 
	\[
	\eta_{\h}(z_0) = 1\big/G^{\prime}(0).
	\]
	From above we obtain that $ \eta_{D_{k_j}}(z_{k_j}) $ converges to $ \eta_{\h}(z_0) $ as $ j \to \infty $. But from our assumption, we have 
	\[
	\vert \eta_{D_{k_j}}(z_{k_j})- \eta_{\mathcal{H}}(z_0) \vert  > \epsilon_0\big/2
	\]
	and this is a contradiction. Therefore, $ \eta_{D_{k_j}}$ converges to $\eta_{\mathcal{H}}$ uniformly on compact subsets of $\mathcal{H}$. 
\end{proof}
\medskip

\begin{cor}\label{C:AsymptoticHurwitzMetric}
	Let $ D \subset \C$ be a bounded domain. Suppose $ p \in \partial D $ is a $ C^2 $-smooth boundary point. Then 
	\[
	\lim_{z \to p}\eta_D(z)(-\psi(z)) = \eta_{\h}(0)= \vert \omega \vert \big/2.
	\]
\end{cor}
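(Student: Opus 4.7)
The plan is to imitate the proof of Corollary~\ref{C:AsymptoticqD} for the Sugawa metric, since Lemma~\ref{L:ConvergenceOfTheHurwitzMetricScaling} provides the Hurwitz-metric analogue of Proposition~\ref{Pr:ConvergenceOfTheSugawaMetricUnderScaling}. Concretely, let $\{p_j\}$ be an arbitrary sequence in $D$ converging to $p$, and let $T_j(z) = (z - p_j)/(-\psi(p_j))$ be the associated scaling maps with $D_j = T_j(D)$. From the discussion in Section~2, the domains $D_j$ converge to the half-space $\h = \{z : \Re(\overline{\omega}z - 1) < 0\}$ in the Hausdorff sense, and $0 = T_j(p_j) \in D_j$ for every $j$.

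The key input is Lemma~\ref{L:ConvergenceOfTheHurwitzMetricScaling}, which says $\eta_{D_j} \to \eta_{\h}$ uniformly on compact subsets of $\h$; evaluating at the interior point $0 \in \h$ yields $\eta_{D_j}(0) \to \eta_{\h}(0)$. Since $T_j : D \to D_j$ is an affine (hence conformal) biholomorphism, the Hurwitz metric transforms as
\[
\eta_{D_j}(0) = \eta_D(T_j^{-1}(0)) \, |(T_j^{-1})'(0)| = \eta_D(p_j)\,(-\psi(p_j)).
\]
Therefore $\eta_D(p_j)(-\psi(p_j)) \to \eta_{\h}(0)$ as $j \to \infty$, and since $\{p_j\}$ was arbitrary, $\lim_{z \to p} \eta_D(z)(-\psi(z)) = \eta_{\h}(0)$.

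It remains to identify $\eta_{\h}(0)$ explicitly. Since $\h$ is simply connected, the Riemann map $F : \D \to \h$ normalized by $F(0) = 0$ and $F'(0) > 0$ is injective, so $F \in \mathcal{O}^*(0, \h)$, and by the Schwarz lemma (applied after composing with any other candidate in $\mathcal{O}^*(0, \h)$ lifted through $F$) it is extremal; hence $r_{\h}(0) = F'(0)$. Inverting the Riemann map from $\h$ to $\D$ given earlier,
\[
f(z) = \frac{|\omega|\, z}{2 - \overline{\omega} z}, \qquad f'(0) = \tfrac{|\omega|}{2},
\]
so $F'(0) = 1/f'(0) = 2/|\omega|$ and therefore $\eta_{\h}(0) = 1/r_{\h}(0) = |\omega|/2$. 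No step is really an obstacle — the whole statement is a routine consequence of the scaling lemma together with this elementary computation of the Riemann map at the origin.
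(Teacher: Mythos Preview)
Your proof is correct and follows essentially the same route as the paper's: scale along an arbitrary sequence $p_j\to p$, invoke Lemma~\ref{L:ConvergenceOfTheHurwitzMetricScaling} to get $\eta_{D_j}(0)\to\eta_{\h}(0)$, and use conformal invariance of $\eta$ under $T_j$ to rewrite $\eta_{D_j}(0)=\eta_D(p_j)(-\psi(p_j))$. You additionally supply the explicit identification $\eta_{\h}(0)=|\omega|/2$ via the Riemann map, which the paper merely asserts; your Schwarz-lemma justification is fine (note that ``lifted through $F$'' is really just composition with $F^{-1}$, since $F$ is a biholomorphism).
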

\begin{proof}
	By Proposition~\ref{Pr:ConvergenceOfTheSugawaMetricUnderScaling}, if $ D_j $ is the sequence of scaled domains of $ D $ under the affine transformation $ T_j(z) = (z - p_j)\big/(-\psi(p_j)) $, for all $ z \in D $, where $ \psi $ is a local defining function of $ \partial D $ at $ z = p $ and $ p_j $ is a sequence of points in $ D $ converging to $ p $, then the corresponding sequence of Hurwitz metrics $ \eta_{D_j}  $ of $ D_j $ converges to $ \eta_{\h} $ uniformly on every compact subset of $ \h $ as $ j \to \infty $. In particular, $\eta_{D_j}(0) $ converges to $ \eta_{\h}(0) $ as $ j \to \infty $. Again, we know that $ \eta_{D_j}(z) = \eta_{D}(T_j^{-1}(z)) \vert (T_j^{-1})^{\prime}(z)\vert $. From this it follows that  $ \eta_{D_j}(0) = \eta_{D}(p_j) (-\psi(p_j)) $, and consequently, $ \eta_{D}(p_j) (-\psi(p_j)) $ converges to $ \eta_{\h}(0) $ as $ j \to \infty $. Since $ p_j $ is an arbitrary sequence converging to $ p $, hence we have the following
	\[
	\lim_{z \to p}\eta_D(z)(-\psi(z)) = \eta_{\h}(0) = \vert \omega \vert \big/2.
	\]
	
\end{proof}

\begin{proof}[Proof of Theorem~\ref{T:TheHurwitzMetric}]
From Theorem~\ref{T:contHurM}, it follows that the Hurwitz metric is continuous and as a consequence of Corollary~\ref{C:AsymptoticHurwitzMetric}, we have
\[
\eta_D(z) \approx 1\big/{\rm dist}(z, \partial D).
\]
\end{proof}

\section{A curvature calculation}
\noindent
For a smooth conformal metric $ \rho(z)\vert dz \vert $ on a planar domain $ D $, the curvature 
\[
K_{\rho} = -\rho^{-2} \Delta \log \rho
\]
is a well defined conformal invariant. It $ \rho $ is only continuous, Heins \cite{Heins} introduced the notion of generalized curvatures as follows:
\medskip

For $ a \in D $ and $ r > 0 $, let
\[
T(\rho, a, r) = \left. -\frac{4}{r^2}\left\{\frac{1}{2 \pi}\int_0^{2\pi} \log \rho(a + r e^{i \theta})d\theta - \log \rho(a)\right\}\right/ \rho^2(a).
\]
The $ \liminf_{r \to 0} T(\rho, a , r)$ and $ \limsup_{r \to 0} T(\rho, a , r)$ are called generalized upper and lower curvature of $ \rho(z)\vert dz \vert $. Our aim is to give some estimates for the quantities $ T(\rho, a , r) $ for $ r > 0 $ and $ \rho = q_D $ or $ \eta_D $. 
\begin{lem}
	Let $ T_j : D \longrightarrow D_j $ be given by 
	\[
	T_j(z) = \frac{z - p_j}{- \psi(p_j)}
	\]
	where $ D $ and $ p_j \to p $ as before. Then 
	\[
	T\left(\rho, p_j , r\vert \psi(p_j) \vert \right) = T\left((T_j)_*\rho, 0 , r\right)
	\]
	for $ r > 0 $ small enough. Here $ (T_j)_*\rho $ is the push-forward of the metric $ \rho $.
\end{lem}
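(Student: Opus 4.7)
The plan is to perform a direct computation, exploiting the very simple form of the affine map $T_j(z) = (z-p_j)/(-\psi(p_j))$ and the translation-scaling structure of its pushforward. The heart of the matter is that Heins' quantity $T(\rho, a, r)$ is designed to transform correctly under conformal changes of coordinates, and here $T_j$ is merely an affine map, so all computations reduce to algebra.

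First, I would write down the pushforward explicitly. Since $T_j^{-1}(w) = -\psi(p_j) w + p_j$ and $|(T_j^{-1})'(w)| = -\psi(p_j)$, we have
\[
((T_j)_*\rho)(w) = \rho(T_j^{-1}(w)) \cdot (-\psi(p_j)).
\]
In particular, $((T_j)_*\rho)(0) = \rho(p_j) \cdot (-\psi(p_j))$ and $((T_j)_*\rho)(re^{i\theta}) = \rho(p_j + r(-\psi(p_j))e^{i\theta}) \cdot (-\psi(p_j))$.

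Next, I would substitute these into the definition of $T((T_j)_*\rho, 0, r)$. Using $\log(ab) = \log a + \log b$, the constant factor $\log(-\psi(p_j))$ appears in both the angular integral and in $\log ((T_j)_*\rho)(0)$ and therefore cancels. Thus the numerator simplifies to
\[
\frac{1}{2\pi}\int_0^{2\pi} \log \rho\bigl(p_j + r|\psi(p_j)| e^{i\theta}\bigr)\, d\theta - \log \rho(p_j).
\]
Meanwhile the denominator $r^2 \cdot ((T_j)_*\rho)^2(0) = r^2 \cdot \rho^2(p_j) \cdot \psi(p_j)^2$ is exactly $(r|\psi(p_j)|)^2 \cdot \rho^2(p_j)$, which is precisely the denominator appearing in $T(\rho, p_j, r|\psi(p_j)|)$. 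Comparing the two expressions term by term yields the claimed equality.

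There is no genuine obstacle: the only point requiring minor care is that $r > 0$ must be small enough so that the circle $\{p_j + r|\psi(p_j)|e^{i\theta} : \theta \in [0, 2\pi]\}$ lies inside $D$ (equivalently, the circle of radius $r$ around $0$ lies inside $D_j$), which is why the hypothesis ``$r > 0$ small enough'' appears in the statement. Otherwise the lemma is a tautological verification that $T(\cdot, \cdot, \cdot)$ is an affine invariant once one rescales the radius according to the derivative of the map, which is in keeping with its role in defining conformally invariant generalized curvatures.
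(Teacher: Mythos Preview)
Your proposal is correct and follows essentially the same approach as the paper: both proofs expand $T((T_j)_*\rho, 0, r)$ directly using the pushforward formula $((T_j)_*\rho)(w) = \rho(T_j^{-1}(w))\,|(T_j^{-1})'(w)|$ with $(T_j^{-1})'(w) = -\psi(p_j)$, observe that the constant factor $\log(-\psi(p_j))$ cancels between the integral and the center term, and match the remaining expression to $T(\rho, p_j, r|\psi(p_j)|)$. Your write-up is in fact slightly cleaner in that you make explicit the cancellation of the logarithmic constant and the reason for the hypothesis on $r$.
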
 
\begin{proof}
	Computing,
	\begin{align*}
	&T\left((T_j)_*\rho, 0 , r\right)\\
	 &=  \left. -\frac{4}{r^2} \left\{\frac{1}{2 \pi}\int_0^{2\pi} \log((T_j)_*\rho)( re^{i\theta}d) d\theta - \log((T_j)_*\rho)(0)\right\}\right/((T_j)_*\rho)^2(0)\\
	&= \left.-\frac{4}{r^2} \left\{\frac{1}{2 \pi}\int_0^{2\pi} \log\rho (T_j^{-1}(re^{i\theta}))\vert (T_j^{-1})^{\prime}(re^{i\theta}) \vert d\theta - \log\rho ((T_j^{-1})(0))\vert (T_j^{-1})^{\prime}(0) \vert\right\}\right/\rho ((T_j^{-1})(0))^2\vert (T_j^{-1})^{\prime}(0) \vert^2.
	\end{align*}
	Since $ (T_j^{-1})^{\prime}(z) = -\psi(p_j) $ for all $ z \in D $, we have
	\[
	T\left((T_j)_*\rho, 0 , r\right) = \left. -\frac{4}{r^2\vert \psi(p_j) \vert^2} \left\{\frac{1}{2 \pi}\int_0^{2\pi} \log\rho (p_j + r\vert \psi(p_j) \vert e^{i\theta}) d\theta - \log\rho(p_j )\right\}\right/\rho^2(p_j ).
	\]
	This implies
	\[
	T\left(\rho, p_j , r\vert \psi(p_j) \vert \right) = T\left((T_j)_*\rho, 0 , r\right).
	\]
\end{proof}
\begin{lem}
	Fix $ r_0 $ small enough and let $ \epsilon > 0 $ arbitrary. Then
	\[
	-4 - \epsilon < T\left(\rho, p_j , r_0\vert \psi(p_j) \vert \right) < -4 + \epsilon.
	\]
	for $ j $ large. Consequently, for a fixed $ r $ with $ 0 < r < r_0 $,
	\[
	\lim_{j \to \infty}T(\rho, p_j, \vert \psi(p_j) \vert r) = -4
	\]
	
\end{lem}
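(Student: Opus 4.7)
The plan is to leverage the preceding lemma to reduce the claim to a statement about the limiting metric on the half-space, then combine the uniform convergence results already established with smoothness of the Poincar\'e density. By the preceding lemma,
\[
T\bigl(\rho, p_j, r_0|\psi(p_j)|\bigr) = T\bigl((T_j)_*\rho, 0, r_0\bigr).
\]
Since $\rho$, which is either $q_D$ or $\eta_D$, is a conformal invariant and $T_j : D \to D_j$ is a biholomorphism, the push-forward $(T_j)_*\rho$ coincides with $\rho_{D_j}$, the same invariant computed for the scaled domain. So the problem reduces to studying $T(\rho_{D_j}, 0, r_0)$ as $j \to \infty$.

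I would next invoke Proposition~\ref{Pr:ConvergenceOfTheSugawaMetricUnderScaling} (when $\rho = q_D$) or Lemma~\ref{L:ConvergenceOfTheHurwitzMetricScaling} (when $\rho = \eta_D$) to get uniform convergence $\rho_{D_j} \to \rho_{\h}$ on compact subsets of $\h$. Picking $r_0$ small enough that $\overline{B}(0, r_0) \subset \h$, the limit density $\rho_{\h}$ is continuous and strictly positive on this closed disk, hence bounded below, so $\log \rho_{D_j} \to \log \rho_{\h}$ uniformly on $\overline{B}(0, r_0)$. Together with $\rho_{D_j}(0) \to \rho_{\h}(0) > 0$, this upgrades to
\[
T(\rho_{D_j}, 0, r_0) \longrightarrow T(\rho_{\h}, 0, r_0) \quad \text{as } j \to \infty.
\]

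Finally, since $\h$ is simply connected, both $q_{\h}$ and $\eta_{\h}$ coincide with the hyperbolic density there, which is smooth and has constant Gaussian curvature $-4$. A standard Taylor expansion about $0$ yields
\[
\frac{1}{2\pi}\int_0^{2\pi} \log \rho_{\h}(r_0 e^{i\theta})\, d\theta - \log \rho_{\h}(0) = \frac{r_0^2}{4}\Delta \log \rho_{\h}(0) + O(r_0^4),
\]
and hence $T(\rho_{\h}, 0, r_0) = K_{\rho_{\h}}(0) + O(r_0^2) = -4 + O(r_0^2)$. Given $\epsilon > 0$, I first pick $r_0$ small enough that $|T(\rho_{\h}, 0, r_0) + 4| < \epsilon/2$, and then $j$ large enough that $|T(\rho_{D_j}, 0, r_0) - T(\rho_{\h}, 0, r_0)| < \epsilon/2$; the triangle inequality gives the required two-sided bound. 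The consequence for fixed $r \in (0, r_0)$ follows identically: $T(\rho, p_j, r|\psi(p_j)|) \to T(\rho_{\h}, 0, r)$, a value lying within $\epsilon$ of $-4$ by the choice of $r_0$. The only mildly technical point in the whole argument is preservation of uniform convergence through the logarithm and circular averaging, which is routine given the positive lower bound on $\rho_{\h}$ over the compact disk $\overline{B}(0, r_0)$.
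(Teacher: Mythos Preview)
Your proof is correct and follows essentially the same route as the paper: reduce via the preceding lemma to $T(\rho_{D_j},0,r_0)$, invoke the uniform convergence $\rho_{D_j}\to\rho_{\h}$ to pass to $T(\rho_{\h},0,r_0)$, then use that $\rho_{\h}$ has curvature $-4$ to pin this near $-4$ for small $r_0$, and finish with a triangle inequality. You supply a bit more justification than the paper does (the positive lower bound to push convergence through the logarithm, and the Taylor expansion for $T(\rho_{\h},0,r_0)$), but the structure is identical.
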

\begin{proof}
	From the lemma above, it is enough to show the inequality for $ T\left((T_j)_*\rho, 0 , r_0\right) $. Recall that the scaled Sugawa metric and Hurwitz metric, i.e., $ q_{D_j} $ and $ \eta_{D_j} $ converge to $ q_{\h} $ and $ \eta_{\h} $ respectively. The convergence is uniform on compact subsets of $ \h $ and note that $ q_{\h} = \eta_{\h} $. By writing $ \rho_j $ for $ q_{D_j} $ or $ \eta_{D_j} $, and $ \rho_{\h} $ for $ q_{\h} = \eta_{\h} $, we get
	\[
	T(\rho_{D_j}, 0, r) \to T(\rho_{\h}, 0, r)  
	\]
	for a fixed $ r $ as $ j \to \infty $. That is
	\[
	T(\rho_{\h}, 0, r) - \epsilon\big/2 < T(\rho_{D_j}, 0, r) < T(\rho_{\h}, 0, r) + \epsilon\big/2
	\]
	for $ j $ large. Also recall that since the curvature of $ \rho_{\h} $ is equal to $ -4 $, there exists $ r_1 > 0 $ such that 
	\[
	-4 - \epsilon\big/2 < T(\rho_{\h}, 0, r) < -4 + \epsilon\big/2
	\]
	whenever $ r < r_1 $. Now, choose $ r_0 = r_1/2 $, then 
	there exists $ j_0 $ depending on $ r_0 $ such that
	\[
	-4 - \epsilon < T\left(\rho_{D_j}, 0 , r_0\right) < -4 + \epsilon.
	\]
	for all $ j \geq j_0 $.
	This completes the proof.
\end{proof}


\begin{bibdiv}
	\begin{biblist}
		
		\bib{MainAumannCaratheodory}{article}{
			author={Aumann, G.},
			author={C, Carath\'eodory},
			title={Ein satz iiber die konf orme abbildung mehrfach zusammenhangender
				gebiete},
			date={1934},
			journal={Math. Ann.},
			volume={109},
			number={4},
			pages={756\ndash 763},
		}
		
		\bib{BurbeaPaper}{article}{
			author={Burbea, Jacob},
			title={The curvatures of the analytic capacity},
			date={1977},
			ISSN={0025-5645},
			journal={J. Math. Soc. Japan},
			volume={29},
			number={4},
			pages={755\ndash 761},
			url={https://doi.org/10.2969/jmsj/02940755},
			review={\MR{0460624}},
		}
		
		\bib{ScalingInHigherDimensionKrantzKimGreen}{book}{
			author={Greene, Robert~E.},
			author={Kim, Kang-Tae},
			author={Krantz, Steven~G.},
			title={The geometry of complex domains},
			series={Progress in Mathematics},
			publisher={Birkh\"auser Boston, Inc., Boston, MA},
			date={2011},
			volume={291},
			ISBN={978-0-8176-4139-9},
			url={https://doi.org/10.1007/978-0-8176-4622-6},
			review={\MR{2799296}},
		}
		
		\bib{Heins}{article}{
			author={Heins, Maurice},
			title={On a class of conformal metrics},
			date={1962},
			ISSN={0027-7630},
			journal={Nagoya Math. J.},
			volume={21},
			pages={1\ndash 60},
			url={http://projecteuclid.org/euclid.nmj/1118801041},
			review={\MR{0143901}},
		}
		
		\bib{Hejhal}{article}{
			author={Hejhal, Dennis~A.},
			title={Universal covering maps for variable regions},
			date={1974},
			ISSN={0025-5874},
			journal={Math. Z.},
			volume={137},
			pages={7\ndash 20},
			url={https://doi.org/10.1007/BF01213931},
			review={\MR{0349989}},
		}
		
		\bib{InvariantMetricJarnicki}{book}{
			author={Jarnicki, Marek},
			author={Pflug, Peter},
			title={Invariant distances and metrics in complex analysis},
			edition={extended},
			series={De Gruyter Expositions in Mathematics},
			publisher={Walter de Gruyter GmbH \& Co. KG, Berlin},
			date={2013},
			volume={9},
			ISBN={978-3-11-025043-5; 978-3-11-025386-3},
			url={https://doi.org/10.1515/9783110253863},
			review={\MR{3114789}},
		}
		
		\bib{AumannCaratheodoryRigidityConstant}{article}{
			author={Minda, David},
			title={The {A}umann-{C}arath\'eodory rigidity constant for doubly
				connected regions},
			date={1979},
			ISSN={0386-5991},
			journal={Kodai Math. J.},
			volume={2},
			number={3},
			pages={420\ndash 426},
			url={http://projecteuclid.org/euclid.kmj/1138036071},
			review={\MR{553246}},
		}
		
		\bib{HyperbolicMetricCoveringAumannCaratheodoryConstant}{article}{
			author={Minda, David},
			title={The hyperbolic metric and coverings of {R}iemann surfaces},
			date={1979},
			ISSN={0030-8730},
			journal={Pacific J. Math.},
			volume={84},
			number={1},
			pages={171\ndash 182},
			url={http://projecteuclid.org/euclid.pjm/1102784353},
			review={\MR{559634}},
		}
		
		\bib{TheHurwitzMetric}{article}{
			author={Minda, David},
			title={The {H}urwitz metric},
			date={2016},
			ISSN={1661-8254},
			journal={Complex Anal. Oper. Theory},
			volume={10},
			number={1},
			pages={13\ndash 27},
			url={https://doi.org/10.1007/s11785-015-0446-y},
			review={\MR{3441465}},
		}
		
		\bib{CaratheodoryKernelConvergence}{book}{
			author={Segal, Sanford~L.},
			title={Nine introductions in complex analysis},
			edition={Revised},
			series={North-Holland Mathematics Studies},
			publisher={Elsevier Science B.V., Amsterdam},
			date={2008},
			volume={208},
			ISBN={978-0-444-51831-6},
			review={\MR{2376066}},
		}
		
		\bib{SugawaMetric}{article}{
			author={Sugawa, Toshiyuki},
			title={A conformally invariant metric on {R}iemann surfaces associated
				with integrable holomorphic quadratic differentials},
			date={2010},
			ISSN={0025-5874},
			journal={Math. Z.},
			volume={266},
			number={3},
			pages={645\ndash 664},
			url={https://doi.org/10.1007/s00209-009-0590-z},
			review={\MR{2719424}},
		}
		
		\bib{SuitaI}{article}{
			author={Suita, Nobuyuki},
			title={On a metric induced by analytic capacity},
			date={1973},
			journal={K\=odai Math. Sem. Rep.},
			volume={25},
			pages={215\ndash 218},
		}
		
	\end{biblist}
\end{bibdiv}

\end{document}